\definecolor{myblue}{RGB}{0 0 255}
\definecolor{myred}{RGB}{255 0 0}
\newcommand{\occ}[4]{%
	\ifnum#3=2
	\filldraw[fill=#4!30, draw=#4] (#1,#2) circle (0.19);
	\node[text=#4, font=\scriptsize\bfseries] at (#1,#2) {2};
	\else
	\filldraw[fill=#4!30, draw=#4] (#1,#2) circle (0.19);
	\node[text=#4, font=\scriptsize\bfseries] at (#1,#2) {1};
	\fi}
\newcommand{\scaffold}[1]{%
	\foreach \k in {-2,-1,0,1,2}{\draw[black!22, thick] (\k,0.25) -- (\k,-6.1);}
	\ifnum#1=1
	\draw[dashed, black] (-2.4,-2.5) -- (2.4,-2.5);
	\fi}
\newcommand{\cont}[2]{%
	\draw[#2, thick, -{Stealth[length=4pt]}, shorten <=6pt] (#1,-5)--(#1,-5.55);
	\node[#2, font=\Huge] at (#1,-5.92) {$\vdots$};}
\newcommand{\spin}[3]{%
	\filldraw[fill=#3!30, draw=#3, line width=0.4pt]
	(#1-0.5,#2-0.5) rectangle (#1+0.5,#2+0.5);}
\newcommand{\bh}[3]{\draw[#3, line width=2pt] (#1-0.5,#2)--(#1+0.5,#2);}
\newcommand{\bv}[3]{\draw[#3, line width=2pt] (#1,#2-0.5)--(#1,#2+0.5);}
\newcommand{\gridbg}{%
	\foreach \a in {-5.5,-4.5,...,5.5}{%
		\draw[black!30, thin] (\a,-5.5)--(\a,5.5);
		\draw[black!18, thin] (-5.5,\a)--(5.5,\a);}}
\newcommand{\boxmark}{\draw[dashed, black, line width=0.9pt]
	(-2.5,-2.5) rectangle (2.5,2.5);}
\newcommand{\bcframe}{\draw[black!70, line width=2pt] (-5.5,-5.5) rectangle (5.5,5.5);}
\newcommand{\axes}{%
  \fill[black!10] (2,-1.5) rectangle (6.35,1.5);                       
  \draw[-{Stealth[length=4pt]}, black] (0,0) -- (6.45,0);          
  \draw[{Stealth[length=4pt]}-{Stealth[length=4pt]}, black] (0,-1.5) -- (0,1.5); 
  \node[black, font=\scriptsize] at (-0.2,1.35) {$\mathbb{R}$};
  \node[black, font=\scriptsize] at (-0.2,-0.26) {$0$};
  \draw[dashed, black, line width=0.8pt] (2,-1.5) -- (2,1.5);}
\numberwithin{equation}{section}
\numberwithin{figure}{section}
\theoremstyle{plain} \newtheorem{theorem}{Theorem}[section]
\theoremstyle{plain} 
\theoremstyle{plain} \newtheorem{lemma}[theorem]{Lemma}
\theoremstyle{plain} 
\theoremstyle{definition} 
\theoremstyle{definition} 
\theoremstyle{remark} \newtheorem{remark}[theorem]{Remark}
\theoremstyle{remark} \newtheorem{example}[theorem]{Example}
\renewcommand{\P}{\mathbb P}
\newcommand{\R}{\mathbb R}
\newcommand{\mbb}{\mathbb}
\newcommand{\Rbb}{\mathbb{R}}
\newcommand{\Borel}{\mathcal{B}}
\newcommand{\dualInd}{\mathcal{I}}
\newcommand{\dualFunc}{\mathcal{F}}
\newcommand{\diff}{\,\textnormal{d}}
\newcommand{\ind}{\mathbf{1}}
\title[Coupling Theory, Optimal Transport, and Strassen's Theorem]{Coupling Theory, Optimal Transport, and Strassen's Theorem for Eventual Domination}
\author{Adam Quinn Jaffe}
\address[AQJ]{Department of Statistics, Columbia University, New York, USA}
\author{Daniel Raban}
\address[DR]{Department of Statistics, University of California Berkeley, Berkeley, CA, USA}
\email{a.q.jaffe@columbia.edu}
\email{danielraban@berkeley.edu}
\thanks{This material is based upon work for which DR was supported by a research contract with Sandia National Laboratories, a U.S.\ Department of Energy multimission
laboratory.}
\subjclass[2020]{60A10, 28A35, 90C46}
\keywords{equivalence relations,
Kantorovich duality, partial orders,
stochastic domination, Strassen's theorem}
\date{\today}
\begin{document}

\begin{abstract}
Many results in probability (most famously, Strassen's theorem on stochastic domination), characterize some relationship between probability distributions in terms of the existence of a particular structured coupling between them.
    Optimal transport, and in particular Kantorovich duality, provides a common framework for these results.
    We use this perspective to study ``eventual domination'', a class of orders arising naturally in stochastic processes (including in the analysis of branching random walks, Ising models, and diffusions), which does not satisfy the topological conditions required by standard optimal transport theory.
    More generally, we study the connection between distributional relations and their coupling counterparts for topologically irregular preorders (e.g. equivalence relations, partial orders). \\
    \indent To this end, we show that Strassen’s theorem “nearly holds” in this topologically irregular setting but that the full theorem admits counterexamples, including for eventual domination.
    The core of the proof is a novel technical result in optimal transport, which shows that the Kantorovich dual problem is well-behaved for cost functions which can be written as a non-increasing limit of lower semi-continuous functions.
\end{abstract}

\maketitle


\section{Introduction}

\subsection{Background}

A \textit{coupling} of Borel probability measures $\mu,\nu$ on a Polish space $S$ is a  Borel probability measure on $S\times S$ satisfying $\pi(S\times \,\cdot\,) = \mu$ and $\pi(\,\cdot\,\times S) = \nu$.
Equivalently, a coupling is the joint law of a pair of $S$-valued random variables $(X,Y)$ defined on a common probability space in such a way that the marginal distributions of $X$,$Y$ are $\mu$,$\nu$, respectively.
Coupling is one of the core techniques in probability theory, where it is used ubiquitously in essentially all sub-disciplines; see \cite{denHollander, ThorissonII, LindvallBook} for introductions.
At the same time, the study of couplings is interesting in its own right and a great deal literature has addressed fundamental questions in coupling theory, particularly in the setting of stochastic processes \cite{Griffeath, Pitman, Goldstein, SverchkovSmirnov, MEXIT, HsuSturm, SchillingLevy1, BurdzyKendall, BenArousCranstonKendall, BanerjeeKendall2, Kuwada, HummelJaffe} but also more generally \cite{Georgii, Shinko, JaffeCoupling}.

The richness of the theory and applications of couplings led Thorisson to state the ``working hypothesis'' in \cite{ThorissonI} that ``each meaningful distributional relation should have a coupling counterpart''.
Well-known examples include Strassen's theorem on stochastic domination \cite{Lindvall}, characterizations of coalescence of stochastic processes \cite{Griffeath, Pitman, Goldstein}, characterization of Palm measures of stationary point processes \cite{LastThorisson}, Strassen's theorem on convex order \cite{Strassen}, and more.
The working hypothesis can also be seen as a practical matter, as it allows one to develop methods for testing partially-identified statistical models wherein the joint distribution of two random variables is unobserved while their marginal distributions are observed \cite{GalichonHenry, EkelandGalichonHenry,YangYang,CoteWang}.
The goal of this paper, plainly, is to better understand the extent to which Thorisson's working hypothesis is true.

As a concrete example, consider Strassen's theorem on stochastic domination \cite{Strassen, Lindvall}.
It states that if $\le$ is a partial order on $S$ such that the set $R_{\le} := \{(x,y)\in S\times S: x\le y\}$ is sufficiently regular then the following are equivalent:
\begin{itemize}
    \item[(i)] We have $\mu(A) \le \nu(A)$ for all monotone Borel sets $A\subseteq S$.
    \item[(ii)] There exists a coupling $\pi$ of $\mu$ and $\nu$ satisfying $\pi(R_{\le}) = 1$.
\end{itemize}
Here, a Borel set $A\subseteq S$ is called \textit{monotone} if $x\in A$ and $x\le y$ imply $y\in A$.
In terms of the working hypothesis, statement (i) is a distributional relation and statement (ii) is its coupling counterpart.

Another example, called simply \textit{strong duality} in existing literature, comes from the recent works \cite{JaffeCoupling, RigoPratelli}, which state that if $\cong$ is an equivalence relation on $S$ such that the set  $R_{\cong} := \{(x,y)\in S\times S: x\cong y\}$ is regular enough, then the following are equivalent:
\begin{itemize}
    \item[(i)] We have $\mu(A) = \nu(A)$ for all invariant Borel sets $A\subseteq S$.
    \item[(ii)] There exists a coupling $\pi$ of $\mu$ and $\nu$ satisfying $\pi(R_{\cong}) = 1$.
\end{itemize}
Here, a Borel set $A\subseteq S$ is called \textit{invariant} if $x\cong y$ implies that we have $x\in A$ if and only if $y\in A$.
As before, we cast this in terms of the working hypothesis by noting that statement (i) is a distributional relation and statement (ii) is its coupling counterpart.

In the setting of stochastic processes, we are particularly interested in distributional relations and their coupling counterparts for probability measures on spaces of paths.
Specifically, we study the relation of ``eventual domination'' whereby $S$ is endowed with a partial order $\le$ and we relate $(x,y)$ if and only if $x_n \le y_n$ for all sufficiently large $n\in\mathbb{N}$.
We discuss eventual domination in detail in Section~\ref{sec:examples} where we also give several examples and applications from the study of branching random walks, Ising models, and diffusions.

Similar relations have been extensively studied in other parts of probability theory.
For instance, let $\Sigma$ be a Polish space and consider the path space $S=\Sigma^{\mathbb{N}}$; write $x=\{x_n\}_{n\in\mathbb{N}}$ for a generic path in $S$.
If $\Sigma$ is equipped with a partial order $\le$, then many works (e.g., \cite{Massey, RdorfMCs, LopezMartinezSanz, LevyCoupling, BrandtLast, Leskela}) have studied couplings satisfying ``pointwise domination'', i.e., the coupling statement (ii) for the partial order on $S$ which relates $(x,y)$ if and only if $x_n\le y_n$ for all $n\in\mathbb{N}$.
Or, if $\Sigma$ is a general Polish space then many works (e.g., \cite{HsuSturm, SchillingLevy1, BurdzyKendall, BenArousCranstonKendall, BanerjeeKendall2, Kuwada}) have studied couplings satisfying ``eventual equality'', i.e., the coupling problem (ii) which relates $(x,y)$ if and only if $x_n= y_n$ for all sufficiently large $n\in\mathbb{N}$.

\subsection{Problem Statement}

Presently, we propose that Thorisson's working hypothesis should be understood as a statement about Kantorovich duality for optimal transport problems whose cost functions take only values zero and one.
For a formal explanation of this, suppose that $\mu$ and $\nu$ are Borel probability measures on $S$, and that  $R\subseteq S\times S$ is a general measurable relation.
The question of whether there exists a coupling $\pi$ of $\mu$ and $\nu$ satisfying $\pi(R) = 1$ leads one to naturally consider the optimal transport problem
\begin{equation}\label{eqn:cpl-primal}
    \begin{cases}
        \textnormal{minimize} &\int_{S\times S}(1-\ind_{R})\diff \pi \\
        \textnormal{over} &\textnormal{couplings } \pi \textnormal{ of } \mu,\nu.
    \end{cases}
\end{equation}
At the same time, general results on Kantorovich duality show that \eqref{eqn:cpl-primal} is equivalent (in a suitable sense) to the problem
\begin{equation}\label{eqn:cpl-dual}
    \begin{cases}
        \textnormal{maximize} &\int_{S}\phi\diff \mu + \int_{S}\psi \diff \nu \\
        \textnormal{over} &\textnormal{bounded measurable }\phi,\psi:S\to\Rbb \\
        \textnormal{with} &\phi(x)+\psi(y) \le 1-\ind_{R}(x,y)\textnormal{ for all }x,y\in S.
    \end{cases}
\end{equation}
Thus, by considering the distributional statement (i) that problem \eqref{eqn:cpl-dual} has optimal value zero, and the coupling counterpart (ii) that  problem \eqref{eqn:cpl-primal} has optimal value zero, we have realized Thorisson's working hypothesis as a statement about the duality between \eqref{eqn:cpl-dual} and \eqref{eqn:cpl-primal}.

Towards making a rigorous proof out of this formal argument, we consider the setting above and we apply the result \cite[Corollary~2.3.9]{RachevRdorf} to get
\begin{equation}\label{eqn:Kantorovich-general}
    \sup_{\phi,\psi}\left(\int_{S}\phi\diff \mu + \int_{S}\psi \diff \nu\right) = \inf_{\pi} \int (1-\ind_{R})\diff \pi,
\end{equation}
where the domains of the supremum and infimum are specified above.
In other words, problems~\eqref{eqn:cpl-primal} and~\eqref{eqn:cpl-dual} always have the same value.
Completing the argument, then, reduces to answering the following questions:
\begin{itemize}
    \item When is problem~\eqref{eqn:cpl-dual} ``meaningful''?    
    \item When does problem~\eqref{eqn:cpl-primal} admit minimizers?
\end{itemize}
One way of interpreting the term ``meaningful'' is that we require some condition on the (marginal) probabilities assigned by $\mu,\nu$ to some events $A$ in a specified collection $\mathcal{A}$ of Borel sets.
In other words, we would like to compare $\mu,\nu$ by the values they assign to the \textit{same} event $A\in\mathcal{A}$, meaning the Kantorovich problem \eqref{eqn:cpl-dual} should have one dual variable rather than two, and that the dual variables should be indicator functions rather than general measurable functions.

In previous works, affirmative answers to the above questions generally rely on strict topological assumptions about $R$.
For instance, for a partial order $\le$, a standard assumption for Strassen's theorem to hold (see \cite{Strassen}) is that $R_{\le}$ is a closed subset of $S\times S$.
Analogously, for an equivalence relation $\cong$, a sufficient condition for strong duality to hold (see \cite{JaffeCoupling}) is that $R_{\cong}$ is a closed subset of $S\times S$.
In fact, the result of \cite{Lindvall} shows that if $R\subseteq S\times S$ is any closed set corresponding to a reflexive transitive relation (sometimes called \textit{pre-order}), then a form of Strassen's theorem holds.
In this sense, we may regard Thorisson's working hypothesis as completely settled for closed sets corresponding to reflexive transitive relations, but this does not include the pre-order $\preceq$ of eventual equality.
We thus aim to develop stronger results anticipated by the recent works \cite{JaffeCoupling,RigoPratelli} which show that, for equivalence relation $\cong$, one can replace closedness of $R_{\cong}\subseteq S\times S$ with much broader measure-theoretic conditions.

Let us briefly digress to make some basic observations about the connection between the working hypothesis and optimal transport, namely that many of the usual objects of interest in abstract optimal transport (see\cite{Villani}) take on concrete and interpretable forms when the cost function is $c=1-\ind_{R}$ for a suitable relation $R$ on $S$.
For example, suppose that $\preceq$ is a partial order and that $R=R_{\preceq}$.
Then, a function $\phi:S\to\Rbb$ is $c$-convex if and only if it is (non-negative and) monotone, and the $c$-transform $\phi^c$ of $\phi$ is (the negative of) its smallest monotone majorant.
Also, a Borel set $A\subseteq S$ is monotone if and only if $\ind_{A}:S\to\Rbb$ is a $c$-convex function, and the monotone closure $A'$ of $A$ is characterized via $\ind_{A}^c = -\ind_{A'}$.
For another example, suppose that $\cong$ is an equivalence relation and that $R=R_{\cong}$.
Then, a function $\phi$ is $c$-convex if and only if it is (non-negative and) invariant, and the $c$-transform $\phi^c$ is (the negative of) its smallest invariant majorant.
Also, a Borel set $A\subseteq S$ is invariant if and only if $\ind_{A}$ is a $c$-convex function, and the saturation $A'$ of $A$ is characterized via $\ind_{A}^c = -\ind_{A'}$.
Some of these observations have been made implicitly in the work \cite{Rdorf1}, which provides a proof of Strassen's theorem via optimal transport for partial orders $\preceq$ such that $R_{\preceq}$ is closed in $S\times S$.

Returning to the present work, our goal is investigate Thorisson's working hypothesis for reflexive, transitive relations that need not be closed sets, but can be written as a countable increasing union of closed sets; that is, we focus on reflexive, transitive, $F_{\sigma}$ relations.
As we have discussed, our main motivations for this level of generality is to study the pre-order of eventual domination, which is reflexive, transitive, and $F_{\sigma}$, but not antisymmetric.
Additionally, we are motivated by prior literature \cite{JaffeCoupling, RigoPratelli} studying equivalence relations $\cong$, which show that strong duality holds under some purely measure-theoretic conditions on $\cong$;
notably, these works show that strong duality holds in some settings where standard approaches based on Kantorovich duality \cite{Rdorf1} can fail.

\subsection{Main Results}

Now we can state the results of the paper.
Throughout, we use the notation that $\mathcal{B}(S)$ denotes the collection of Borel sets of $S$, and $\Pi(\mu,\nu)$ denotes the space of all couplings of Borel probability measures $\mu,\nu$.
Our main result is the following, which states that \eqref{eqn:cpl-dual} is ``meaningful'' but that \eqref{eqn:cpl-primal} need not admit solutions:

\begin{theorem}\label{thm:main}
    If $S$ is a Polish space with its Borel $\sigma$-algebra $\mathcal B(S)$, and $R \subseteq S \times S$ is a reflexive, transitive, $F_\sigma$ relation, then we have
    \begin{equation*}
        \max_{A \in R^{\ast}} (\mu(A) - \nu(A)) = \inf_{\pi \in \Pi(\mu,\nu)} (1 - \pi(R))
    \end{equation*}
    for any Borel probability measures $\mu,\nu$ on $S$, where
    \begin{equation*}
        R^{\ast} := \{A\in\mathcal B(S): \textnormal{ if }x\in A \textnormal{ and } (x,y)\in R, \textnormal{ then } y\in A\}.
    \end{equation*}
    The dual problem always admits optimizers, but the primal problem may not.
\end{theorem}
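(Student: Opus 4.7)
The plan is to establish the equality by proving each direction separately, and then address attainment using the paper's main optimal-transport result. For the easy inequality $\max_{A\in R^{\ast}}(\mu(A)-\nu(A))\le\inf_{\pi}(1-\pi(R))$, I would observe that if $A\in R^{\ast}$ then $A\times A^c\subseteq R^c$ by upward-closedness, so for any coupling $\pi\in\Pi(\mu,\nu)$,
\begin{equation*}
    \mu(A)-\nu(A)=\pi(A\times A^c)-\pi(A^c\times A)\le\pi(R^c)=1-\pi(R).
\end{equation*}

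The reverse inequality is the substantive step, and I would attack it via the general Kantorovich duality \eqref{eqn:Kantorovich-general} followed by a reduction of dual variables to indicator functions. Given a feasible pair $(\phi,\psi)$ for the cost $c=1-\ind_R$, I would replace $\phi$ by its $c$-transform $\wt\phi:=\psi^c$, which only improves the objective while preserving feasibility. Transitivity of $R$ then forces $\wt\phi$ to be $R$-monotone: if $(x,x')\in R$ then $(x',y)\in R$ implies $(x,y)\in R$, so $c(x,y)\le c(x',y)$ pointwise in $y$, hence $\wt\phi(x)\le\wt\phi(x')$. A symmetric application of the $c$-transform to $\psi$, together with an additive shift $\phi\mapsto\phi+s$, $\psi\mapsto\psi-s$ that leaves the objective invariant since $\mu,\nu$ are probabilities, allows reduction to the case $\wt\phi:S\to[0,1]$ with $\psi=-\wt\phi$, using reflexivity to check that the second branch of the $c$-transform is inactive. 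The dual supremum is thereby bounded by $\sup_{\wt\phi}\bigl(\int\wt\phi\diff\mu-\int\wt\phi\diff\nu\bigr)$ over $R$-monotone $\wt\phi\in[0,1]$, and the layer-cake identity $\int\wt\phi\diff\mu-\int\wt\phi\diff\nu=\int_0^1(\mu(A_t)-\nu(A_t))\diff t$ with $A_t:=\{\wt\phi>t\}\in R^{\ast}$ yields the bound $\le\sup_{A\in R^{\ast}}(\mu(A)-\nu(A))$.

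To promote the resulting supremum to a maximum, I would invoke the paper's novel Kantorovich duality theorem announced in the abstract. Since $R$ is $F_\sigma$, write $R=\bigcup_n R_n$ with $R_n$ closed and increasing, so $c_n:=1-\ind_{R_n}$ is lower semi-continuous with $c_n\downarrow c$; the duality theorem furnishes an attaining dual pair. Running the reduction above produces a monotone $\wt\phi^{\ast}\in[0,1]$ for which $\int_0^1(\mu(A_t)-\nu(A_t))\diff t$ equals the common value $V$ of sup and inf. Because the integrand is bounded above by $V$ for every $t$, equality must hold for Lebesgue-a.e.\ $t\in[0,1]$, so any such $t$ furnishes an attaining $A_t\in R^{\ast}$. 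The failure of attainment of the primal infimum would then be certified by an explicit counterexample to be constructed from a carefully chosen $F_\sigma$ relation.

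I expect the main obstacle to be the attainment step, which genuinely requires the novel Kantorovich-duality theorem for cost functions expressible as non-increasing limits of lower semi-continuous functions rather than lower semi-continuous functions themselves: standard duality results ensure no duality gap but do not secure attainment in this regime. A secondary but routine technical issue is the careful additive normalization and range control allowing us to assume $\wt\phi\in[0,1]$ so that the identity $\wt\phi^c=-\wt\phi$ holds and the layer-cake reduction lands in $R^{\ast}$.
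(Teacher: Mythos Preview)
Your high-level architecture---Kantorovich duality, reduce to one monotone dual variable, then layer-cake to sets in $R^{\ast}$---matches the paper's, but you have misidentified both the hard step and what the paper's novel theorem actually does. In your reduction you set $\wt\phi=\psi^{c}$ and then integrate it and form its super-level sets $A_t$. For $c=1-\ind_R$ with $R$ merely Borel, the $c$-transform of a bounded Borel function is in general only \emph{universally} measurable (it is the projection of a Borel set), so neither $\int\wt\phi\diff\mu$ nor the membership $A_t\in\Borel(S)$, hence $A_t\in R^{\ast}$, is automatic. This measurability obstruction---not ``additive normalization and range control''---is the genuine technical content of the variable-reduction step, and it is exactly what Theorem~\ref{thm:fn_reduction_to_one_var} is for: the $F_\sigma$ hypothesis is used to replace $\psi^c$ by a Borel function with $\varepsilon$-close objective value, via inner compact approximation and the observation that projections of sets of the form $(S\times K)\cap\{c_n<\alpha\}$ are closed when $K$ is compact and $c_n$ lower semicontinuous.

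More seriously, Theorem~\ref{thm:fn_reduction_to_one_var} does \emph{not} furnish an attaining dual pair. Its conclusion is only that the two-variable supremum equals the one-variable supremum; both remain suprema. Your sentence ``the duality theorem furnishes an attaining dual pair'' is therefore the real gap. In the paper, dual attainment is a separate argument (Subsection~\ref{subsec:maximizer}): take near-maximizers $\phi_n$ in $L^2(\rho)$ with $\rho=(\mu+\nu)/2$, extract a weak limit $\phi_\infty$ by Banach--Alaoglu, and then---since $\phi_\infty$ is only an $L^2$-class and the constraint $\phi(x)-\phi(y)\le c(x,y)$ is pointwise---invoke a Haydon--Shulman result (Lemma~\ref{lem:Arveson}) to modify $\phi_\infty$ on a $\rho$-null set so that the pointwise inequality holds everywhere rather than merely $\pi$-a.e.\ for each coupling. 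Without this step there is nothing to which your layer-cake argument can be applied. Your final extraction of a maximizing $A_t$ from the layer-cake integral is correct and is exactly how the paper finishes, but it presupposes the maximizer whose existence you have not secured.
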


This result states that Thorisson's working hypothesis ``nearly holds'' for reflexive, transitive, $F_{\sigma}$ relations.
More precisely, it implies that the following distributional relation and its coupling counterpart are equivalent:
\begin{itemize}
    \item[(i)] We have $\mu(A) \le \nu(A)$ for all $A\in R^{\ast}$.
    \item[(ii)] For all $\varepsilon>0$, there exists a coupling $\pi_{\varepsilon}$ of $\mu$ and $\nu$ satisfying $\pi_{\varepsilon}(R) \ge 1-\varepsilon$.
\end{itemize}
However, the coupling counterpart statement (ii) cannot in general be improved.

The proof of Theorem~\ref{thm:main}, about which we now make a few remarks, is given in Section~\ref{sec:proof} and Section~\ref{sec:counter-ex}.
First, we note that the negative part is shown by directly constructing counterexamples in the setting of a simple $F_{\sigma}$ partial order on $[0,2]$, and then embedding this into further settings of interest.
Second, we note that the positive part involves a few steps, one of which is a general result in optimal transport that may be of independent interest.
To state it, recall that the usual optimal transport theory (e.g., \cite{Villani}) requires a cost function $c:S\times S\to\Rbb$ which is lower semicontinuous.
The following result shows that the Kantorovich dual problem is still well-behaved when $c$ lacks this regularity.

\begin{theorem}\label{thm:fn_reduction_to_one_var}
    Suppose $S$ is a Polish space and that $c:S\times S\to[0,\infty)$ satisfies 
    \begin{itemize}
        \item[(a)] $c(x,x) = 0$ for all $x\in S$,
        \item[(b)] $c(x,z) \le c(x,y) + c(y,z)$ for all $x,y,z\in S$, and
        \item[(c)] there exists $\{c_n\}_{n\in\mathbb{N}}$ such that $c_n:S\times S\to [0,\infty)$ is lower semicontinuous for each $n\in\mathbb{N}$, and $c_n\downarrow c$ pointwise as $n\to\infty$.
    \end{itemize}
    Then, for all Borel probability measures $\mu,\nu$ on $S$ we have
    \begin{equation}\label{eqn:one_var_function_equality}
        \sup_{\substack{\phi,\psi\in b\mathcal{B}(S)\\\phi\oplus \psi \le c}}\left(\int_{S} \phi\diff \mu + \int_{S}\psi\diff\nu\right) = \sup_{\substack{\phi\in b\mathcal{B}(S) \\ \phi\ominus\phi\le c}}\left(\int_{S} \phi\diff \mu - \int_{S}\phi\diff\nu\right)
    \end{equation}
    where $b\mathcal{B}(S)$ denotes the set of all bounded measurable functions from $S$ to $\Rbb$.
\end{theorem}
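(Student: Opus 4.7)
The $\ge$ direction of \eqref{eqn:one_var_function_equality} is immediate: given any $\phi \in b\mathcal{B}(S)$ with $\phi \ominus \phi \le c$, the pair $(\phi, -\phi)$ is feasible on the left with matching objective. For the reverse inequality, starting from a feasible pair $(\phi,\psi)$ on the left, the natural candidate is the $c$-transform
\[
\tilde\phi(x) := \inf_{y\in S}\bigl(c(x,y) - \psi(y)\bigr).
\]
Direct calculations using $\phi(x)+\psi(y)\le c(x,y)$, hypothesis (a), and hypothesis (b) show that $\phi \le \tilde\phi \le -\psi$ pointwise and that $\tilde\phi \ominus \tilde\phi \le c$, so $\int \tilde\phi \diff\mu - \int \tilde\phi \diff\nu \ge \int \phi \diff\mu + \int \psi \diff\nu$. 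The sole obstacle is measurability: as an uncountable infimum of a Borel function, $\tilde\phi$ is a priori only universally measurable rather than Borel.

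To overcome this, I first observe that hypotheses (a), (b), (c) force $c$ itself to be upper semicontinuous. Consider the path pseudo-metric generated by $c_n$,
\[
\hat c_n(x,y) := \inf \sum_{i=1}^{k} c_n(x_{i-1},x_i),
\]
where the infimum is over $k \ge 1$ and finite paths $x=x_0,\dots,x_k=y$. After arranging $c_n(x,x)=0$ for all $x$ (by replacing $c_n$ with its diagonal-zero modification, which remains LSC), one checks that $\hat c_n$ satisfies the triangle inequality, that $c \le \hat c_n \le c_n$ (using (b)), and that $\hat c_n \downarrow c$. The crucial point is that, for each fixed $k \ge 2$, the $k$-step minimum $\hat c_n^{(k)}(x,y) = \inf_{x_1,\dots,x_{k-1}}\sum c_n(x_{i-1},x_i)$ is the infimum over intermediate variables of a jointly LSC function on $S^{k+1}$; since coordinate projections in Polish products are open maps, the sublevel set $\{\hat c_n^{(k)} < \alpha\}$ is open, so $\hat c_n^{(k)}$ is USC. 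Hence $\hat c_n = \inf_{k \ge 2} \hat c_n^{(k)}$ is USC, and so is $c = \inf_n \hat c_n$.

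With $c$ now known to be USC, I would regularize $\tilde\phi$ to a Borel function via a Lusin-style construction. Lusin's theorem applied to the $(\mu+\nu)$-measurable function $\tilde\phi$ produces, for any $\eta > 0$, a compact $K \subseteq S$ with $(\mu+\nu)(S \setminus K) < \eta$ on which $\tilde\phi|_K$ is continuous. Set
\[
\hat\phi(x) := \inf_{z \in K}\bigl(\tilde\phi(z) + c(x,z)\bigr),
\]
truncated at $\pm \|\tilde\phi\|_\infty$. Since $\tilde\phi|_K$ is continuous on compact $K$ and $c$ is USC, the integrand is jointly USC in $(x,z)$ on $S \times K$, and the same projection argument shows that $\hat\phi$ is USC in $x$, hence Borel; the triangle inequality gives $\hat\phi \ominus \hat\phi \le c$, and truncation preserves both the bound and the $c$-Lipschitz property. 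Using the pointwise $c$-Lipschitz condition on $\tilde\phi$, a short check shows $\hat\phi = \tilde\phi$ on $K$, whence
\[
\Bigl|\int \hat\phi \diff\mu - \int \hat\phi \diff\nu - \int \tilde\phi \diff\mu + \int \tilde\phi \diff\nu\Bigr| \le 2\|\tilde\phi\|_\infty (\mu+\nu)(S \setminus K) < 2\|\tilde\phi\|_\infty \eta.
\]
Letting $\eta \to 0$ and taking the supremum over feasible $(\phi,\psi)$ completes the proof. The main obstacle throughout is the measurability defect of the $c$-transform; the structural fact that $c$ must itself be USC (forced by (a), (b), (c)) is precisely what enables Lusin-based regularization to yield a Borel $c$-Lipschitz approximation with negligible loss in the objective.
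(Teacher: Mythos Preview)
Your proof contains a fatal gap: the claim that (a), (b), (c) force $c$ to be upper semicontinuous is false. For any reflexive transitive $F_\sigma$ relation $R$, the function $c=1-\ind_R$ satisfies (a)--(c), and $c$ is USC precisely when $R$ is open. The paper's own Example~\ref{ex:counterexample}, with $S=[0,2]$ and $R=\{(x,y):x=y\text{ or }y>x+1\}$, gives such a $c$ that is not USC: every neighbourhood of the diagonal point $(0,0)$ contains points $(0,\varepsilon)\notin R$, so $R=\{c<1\}$ is not open.

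The error lies in the projection step. You assert that $\{\hat c_n^{(k)}<\alpha\}$ is open because it is the projection of the set $\{(x_0,\dots,x_k):\sum_i c_n(x_{i-1},x_i)<\alpha\}$. Openness of the projection would follow if this set were open, i.e.\ if $\sum_i c_n$ were \emph{upper} semicontinuous; but the hypothesis gives only \emph{lower} semicontinuity, for which strict sublevel sets are merely $F_\sigma$, not open. (The diagonal-zero modification of $c_n$ does remain LSC, but that is not the issue.) In the example above one has $c_n(x,x)=0$ already, and transitivity of $R_n$ forces $\hat c_n^{(k)}=c_n$ for every $k\ge 1$, which is LSC but not USC. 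With $c$ not USC, your Lusin construction also collapses: the function $\hat\phi(x)=\inf_{z\in K}(\tilde\phi(z)+c(x,z))$ inherits no semicontinuity and need not be Borel.

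The paper's route is different. Rather than seeking regularity of $c$, it approximates the universally measurable $\psi^c$ from below by a simple function $g=\sum_i b_i\ind_{K_i}$ supported on disjoint \emph{compact} sets $K_i$ (via inner regularity of $\mu$), and proves directly that $g^c$ is Borel: its strict sublevel sets decompose as countable unions over $n$ and $i$ of projections of $\{(x,y)\in S\times K_i:c_n(x,y)<\alpha+b_i\}$, and compactness of $K_i$ together with lower semicontinuity of $c_n$ is used to argue that each such projection is closed. The leverage comes from compactness of the fibre, not from any global semicontinuity of $c$.
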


Note, in particular, that this result applies to cost functions $c$ which are upper semicontinuous, since every upper semicontinuous function can be written as a non-increasing limit of continuous functions.
However, we will apply the result to cost functions which are neither lower semicontinuous nor upper semicontinuous.

Lastly, we note that there are some remaining open questions arising from the connection we have identified between Thorisson's working hypothesis and optimal transport for zero-one-valued cost functions.
For instance, one may hope to understand cyclic monotonicity for such cost functions, hence \cite[Theorem~1]{BetterTransport} establishing optimality conditions for the resulting optimal transport problem;
this would imply some further structure of the various couplings statements appearing as statement (ii) above, as was previously done for equivalence relations in \cite[Proposition~3.12]{JaffeCoupling}.
For another example, one may hope to understand the existence of Monge solutions to the resulting optimal transport problem; this would imply that the couplings appearing in statement (ii) above can be achieved ``without additional randomness,'' as was previously done in \cite{Artstein} in the setting of measurable selections of random set-valued functions and in \cite{HummelJaffe} in the setting of germ couplings of Brownian motions with drift.


\section{Examples of Eventual Domination}\label{sec:examples}

In this section we give three examples of eventual domination arising in various disciplines of stochastic processes.
All of our examples can be understood in the following general framework: for some partial order $\le$ on a set $\Sigma$ we consider the pre-order $\preceq$ on $S:=\Sigma^{\mathbb N}$ whereby $x=\{x_n\}_{n\in\mathbb N}$ and $y=\{y_n\}_{n\in\mathbb N}$ satisfy $x\preceq y$ if and only if $x_n\le y_n$ for sufficiently large $n\in\mathbb N$.

For such relations, we claim that the monotone sets $R_\preceq^*$ constitute a strict subset of the tail $\sigma$-algebra $\mathcal T:=\bigcap_{n\in\mathbb N}\sigma(x_n,x_{n+1},\ldots)$, which is interesting from the perspective of Theorem~\ref{thm:main} because it says that the question of whether two stochastic processes can be coupled to satisfy eventual domination can be determined without using the full information of $\mathcal{T}$.
To see that $R_\preceq^* \subseteq \mathcal T$, notice that if $A \in R_\preceq^*$ and $x,y$ differ in only finitely many coordinates, then we have both $x \preceq y$ and $y \preceq x$, hence $x \in A$ if and only if $y \in A$, and this means $A\in\mathcal{T}$.
To see that the containment is strict, consider simply $\Sigma = \{0,1\}$ and $A:=\{x\in S : x_n = 0 \text{ for infinitely man } n\in\mathbb{N}\}$, which is in $\mathcal{T}$ but not $R_{\preceq}^{\ast}$.
In the rest of this section, we give various examples of events of interest in $R_{\preceq}^{\ast}$ in some concrete settings.

Our first and main example comes from the theory of branching random walks and related measure-valued process in discrete space and discrete time; such relations among branching random walks have been studied in \cite{Hutchcroft} and subsequent works \cite{BertacchiZucca,BertacchiCandelleroZucca} (also in earlier work \cite{JohnsonJunge}).

\begin{figure}
    \centering
    \begin{tikzpicture}[scale=0.85, arr/.style={thick, -stealth, shorten >=5pt,}]
		
		\begin{scope}
            \fill[black!10] (-2.5,-2.5) rectangle (2.5,0.25);
			\scaffold{1}
			\node[myred, font=\small] at (0,0.9) {$x$};
			\begin{scope}[myred!90, arr]
				\draw (0,0)--(-1,-1);   \draw (0,0)--(1,-1);
				\draw (-1,-1)--(-1,-2);  \draw (1,-1)--(0,-2);        
				\draw (-1,-2)--(0,-3);   \draw (0,-2)--(0,-3);        
				\draw (0,-3)--(-1,-4);   \draw (0,-3)--(1,-4);        
				\draw (-1,-4)--(-2,-5);  \draw (-1,-4)--(0,-5);       
			\end{scope}
			\occ{0}{0}{1}{myred}
			\occ{-1}{-1}{1}{myred}\occ{1}{-1}{1}{myred}
			\occ{-1}{-2}{1}{myred}\occ{0}{-2}{1}{myred}
			\occ{0}{-3}{2}{myred}                                    
			\occ{-1}{-4}{1}{myred}\occ{1}{-4}{1}{myred}
			\occ{-2}{-5}{1}{myred}\occ{0}{-5}{1}{myred}
			\cont{-2}{myred}\cont{0}{myred}
			\draw[{Stealth[length=4pt]}-{Stealth[length=4pt]}, black] (-2.35,-6.4) -- (2.45,-6.4);
			\node[black, font=\scriptsize] at (2.75,-6.4) {$E$};
		\end{scope}
		
		\node[scale=1] at (3.2,-0.5) {$\preceq$};
		
		\begin{scope}[shift={(6.4,0)}]
            \fill[black!10] (-2.5,-2.5) rectangle (2.5,0.25);
			\scaffold{1}
			\node[myblue, font=\small] at (0,0.9) {$y$};
			\begin{scope}[myblue!90, arr]
				\draw (0,0)--(-1,-1);   \draw (0,0)--(0,-1);
				\draw (-1,-1)--(-2,-2);  \draw (-1,-1)--(0,-2);  \draw (0,-1)--(1,-2); 
				\draw (-2,-2)--(-1,-3);  \draw (0,-2)--(0,-3);   \draw (1,-2)--(0,-3);  
				\draw (-1,-3)--(0,-4);   \draw (0,-3)--(-1,-4);  \draw (0,-3)--(1,-4);  
				\draw (-1,-4)--(-2,-5); \draw (1,-4)--(0,-5); \draw (1,-4)--(1,-5);
			\end{scope}
			\occ{0}{0}{1}{myblue}
			\occ{-1}{-1}{1}{myblue}\occ{0}{-1}{1}{myblue}
			\occ{-2}{-2}{1}{myblue}\occ{0}{-2}{1}{myblue}\occ{1}{-2}{1}{myblue}
			\occ{-1}{-3}{1}{myblue}\occ{0}{-3}{2}{myblue}           
			\occ{-1}{-4}{1}{myblue}\occ{0}{-4}{1}{myblue}\occ{1}{-4}{1}{myblue}
			\occ{-2}{-5}{1}{myblue}\occ{0}{-5}{1}{myblue}\occ{1}{-5}{1}{myblue}    
			\cont{-2}{myblue}\cont{0}{myblue}\cont{1}{myblue}
			\draw[{Stealth[length=4pt]}-{Stealth[length=4pt]}, black] (-2.35,-6.4) -- (2.45,-6.4);
			\node[black, font=\scriptsize] at (2.75,-6.4) {$E$};
		\end{scope}
		
	\end{tikzpicture}
    \caption{Eventual domination for sample paths of branching random walks (Example~\ref{ex:BRW}) in the space $E=\mathbb{Z}$.
    The measure-valued path $x$ is eventually dominated by $y$ since $x$ is eventwise less than $y$ after sufficiently large time.}
    \label{fig:ex-BRW}
\end{figure}

\begin{example}[branching random walks]\label{ex:BRW}
    To set things up, let $E$ be a countable state space, let $\Sigma:=\mathcal{M}(E)$ denote the space of non-negative finite measures on $E$, and set $S:=(\mathcal{M}(E))^{\mathbb N}$ be the space of discrete-time measure-valued paths on $E$.
    For each Markov transition kernel $P$ on $E$ and each probability measure $\lambda$ on $\mathbb N=\{0,1,\ldots\}$, we may consider the branching random walk corresponding to $(P,\lambda)$, which is the measure on $S$ representing the occupancy of a population undergoing Galton-Watson branching with offspring distribution $\lambda$ and where each individual experiences independent spatial motion according to $P$.
    See Figure~\ref{fig:ex-BRW} for an illustration in the case of $E=\mathbb{Z}$ and where $P$ is the transition kernel of a simple symmetric random walk.

    To define the relation of interest, we consider the partial order $\le$ of eventwise domination on $\Sigma=\mathcal{M}(E)$, and then we let $\preceq$ denote the corresponding pre-order of eventual domination on $S$.
    In terms of the branching populations, the eventual domination $x\preceq y$ means that, after sufficiently large time, every site in $E$ contains at least as many individuals from population $y$ as from population $x$.
    Thus, our Theorem~\ref{thm:main} states     
    \begin{equation*}
        \max_{A\in R_{\preceq}^{\ast}}(\mu(A) - \nu(A)) = \inf_{\pi\in\Pi(\mu,\nu)}(1-\pi(R_{\preceq}))
    \end{equation*}
    for all $\mu,\nu\in\mathcal{P}(S)$, although the infimum need not be achieved.
    (See \cite{BertacchiZucca} for related considerations that the primal problem need not be achieved.)

    The main reason for studying $\preceq$ and related orders in \cite{Hutchcroft} is to control the probability of extinction and non-extinction events in one branching random walk with respect to that of another.
    For example, note for all $B\subseteq E$ that events of the form $\{x\in S:x_n(B) \ge 1 \textnormal{ infinitely often}\}$ are in $R_{\preceq}^{\ast}$ and can be interpreted as the event that $x$ does not experience extinction in the region $B$; as such, the primal problem allows one to control all such non-extinction events uniformly; see \cite[Theorem~1.1]{Hutchcroft} for related results concerning the ``germ order'' on probability generating functions.
\end{example}

The remaining examples can be cast in the above form $S=\Sigma^{\mathbb N}$ by application of suitable bijections to the constructions below, but we find it more convenient to set them up by other means;
additionally, this highlights the generality of orderings that can be viewed as special cases of eventual domination.

Our second example concerns Ising models (equivalently, lattice gases) for which general background can be found in \cite{GeorgiiGibbs,PrestonFields}.
Orderings on configurations play a large role in this part of statistical mechanics, e.g., in the analysis of Glauber dynamics \cite[Chapter~15]{LevinPeres} and in the analysis of uniqueness of Gibbs measures \cite{PirogovSinai,Zahradnik,VanDenBergMaes}.

\begin{figure}
    \centering
    \begin{tikzpicture}[scale=0.5]
		
		\begin{scope}
            \fill[black!10] (-2.5,-2.5) rectangle (2.5,2.5);
			\gridbg
			\foreach \i/\j in {%
				-1/5,0/5,1/5, -1/4,0/4,1/4, -1/3,0/3,1/3,
				5/0,4/-1,4/0,4/1,3/-1,3/0,3/1,
				-2/-5,-1/-5,-3/-4,-2/-4,-1/-4,-5/-3,-4/-3,-3/-3,-2/-3,-3/-2,
				-1/2,0/2,1/2,2/2, -1/1,0/1,1/1,2/1,
				-2/0,-1/0,0/0,1/0,2/0, -2/-1,-1/-1,0/-1,1/-1,2/-1,
				-2/-2,-1/-2,0/-2,1/-2}{\spin{\i}{\j}{myred}}
			\boxmark
			\bcframe
			\foreach \i in {-1,0,1}{\bh{\i}{5.5}{myred}}
			\foreach \j in {0}{\bv{5.5}{\j}{myred}}
			\foreach \i in {-2,-1}{\bh{\i}{-5.5}{myred}}
			\foreach \j in {-3}{\bv{-5.5}{\j}{myred}}
			\node[myred, font=\small] at (0,6.4) {$x$};
		\end{scope}
        \node[black, font=\scriptsize] at (5,-5) {$G$};
		
		\node[scale=1] at (7,0) {$\preceq$};
		
		\begin{scope}[shift={(14,0)}]
			\fill[black!10] (-2.5,-2.5) rectangle (2.5,2.5);
            \gridbg
			\foreach \i/\j in {%
				-2/5,-1/5,0/5,1/5, -2/4,-1/4,0/4,1/4, -2/3,-1/3,0/3,1/3, -1/2,0/2,1/2,
				5/-1,5/0,5/1, 4/-1,4/0,4/1, 3/-1,3/0,3/1, 2/0,2/1,
				-3/-5,-2/-5,-1/-5, -4/-4,-3/-4,-2/-4,-1/-4,
				-5/-3,-4/-3,-3/-3,-2/-3, -4/-2,-3/-2,-2/-2}{\spin{\i}{\j}{myblue}}
			\boxmark
			\bcframe
			\foreach \i in {-2,-1,0,1}{\bh{\i}{5.5}{myblue}}
			\foreach \j in {-1,0,1}{\bv{5.5}{\j}{myblue}}
			\foreach \i in {-3,-2,-1}{\bh{\i}{-5.5}{myblue}}
			\foreach \j in {-3}{\bv{-5.5}{\j}{myblue}}
			\node[myblue, font=\small] at (0,6.4) {$y$};
            \node[black, font=\scriptsize] at (5,-5) {$G$};
		\end{scope}
		
	\end{tikzpicture}
    \caption{Eventual domination for Ising model configurations (Example~\ref{ex:Ising}), for the graph $G=\mathbb{Z}^2$.
    The configuration $x$ is eventually dominated by $y$ since $x$ is pointwise less than $y$ outside of a finite box.}
    \label{fig:ex-Ising}
\end{figure}

\begin{example}[Ising model]\label{ex:Ising}
    Fix an infinite, locally finite graph $G=(V,E)$, consider the set of spins $\Sigma:=\{0,1\}$, and define the configuration space $S:=\{0,1\}^{G}$.
    For convenience, let $\{\Lambda_n\}_{n\in\mathbb N}$ be a non-decreasing collection of finite subsets of $V$ with $\Lambda_n\uparrow V$ as $n\to\infty$, and consider an element $x=\{x_n\}_{n\in\mathbb N}\in S$ to be a sequence of functions $x_n:\Lambda_n\to \{0,1\}$ which are compatible in the sense that the restriction of $x_{n+1}$ to $\Lambda_{n}$ equals $x_n$.
    To define the relation, let $\le$ be the natural ordering on $\Sigma:=\{0,1\}$, and define $\preceq$ to be the relation of eventual domination; concretely, $x\preceq y$ means $y$ has a positive spin everywhere that $x$ contains a positive spin, except possibly inside a large finite region.
    See Figure~\ref{fig:ex-Ising} for an illustration in the case $G=\mathbb{Z}^2$ with $\Lambda_n=[-n,n]^2\cap\mathbb{Z}^2$.
    In this setting, Theorem~\ref{thm:main} yields     
    \begin{equation*}
        \max_{A\in R_{\preceq}^{\ast}}(\mu(A) - \nu(A)) = \inf_{\pi\in\Pi(\mu,\nu)}(1-\pi(R_{\preceq}))
    \end{equation*}
    for all $\mu,\nu\in\mathcal{P}(S)$, but as always the infimum need not be achieved.
    We note that the weak form of this duality (i.e., the statement that the primal upper bounds the dual) is essentially used in the \textit{disagreement percolation} argument of \cite{VanDenBergMaes} where a primal feasible coupling is explicitly constructed via site percolation on $G$.
\end{example}

The third example comes from stochastic calculus, for which we rely on material from standard sources like \cite{RevuzYor}.
Specifically, we consider an ordering of initial domination, which can be seen as a form of eventual domination under time inversion or reversal; this is a natural one-sided extension of the more commonly-studied equivalence relations of initial equality \cite{JaffeCoupling,HummelJaffe,MEXIT} and eventual equality 
\cite{Griffeath, Pitman, Goldstein, SverchkovSmirnov, HsuSturm, SchillingLevy1, BurdzyKendall, BenArousCranstonKendall, BanerjeeKendall2, Kuwada}.

\begin{figure}
    \centering
    \begin{tikzpicture}[scale=1.7]

  \begin{scope}
    \axes
    \draw[myred, line width=0.6pt, line join=round] plot coordinates {(0.000,-0.558) (0.004,-0.604) (0.008,-0.633) (0.012,-0.621) (0.016,-0.637) (0.020,-0.621) (0.024,-0.595) (0.028,-0.641) (0.032,-0.609) (0.036,-0.628) (0.040,-0.561) (0.044,-0.538) (0.048,-0.555) (0.052,-0.549) (0.056,-0.542) (0.060,-0.532) (0.064,-0.488) (0.068,-0.471) (0.072,-0.418) (0.076,-0.398) (0.080,-0.404) (0.084,-0.452) (0.088,-0.502) (0.092,-0.495) (0.096,-0.473) (0.100,-0.428) (0.104,-0.447) (0.108,-0.444) (0.112,-0.470) (0.116,-0.484) (0.120,-0.556) (0.124,-0.532) (0.128,-0.555) (0.132,-0.557) (0.136,-0.543) (0.140,-0.533) (0.144,-0.574) (0.148,-0.526) (0.152,-0.502) (0.156,-0.486) (0.160,-0.507) (0.164,-0.558) (0.168,-0.610) (0.172,-0.616) (0.176,-0.611) (0.180,-0.633) (0.184,-0.575) (0.188,-0.566) (0.192,-0.534) (0.196,-0.506) (0.200,-0.540) (0.204,-0.508) (0.208,-0.494) (0.212,-0.534) (0.216,-0.511) (0.220,-0.477) (0.224,-0.446) (0.228,-0.413) (0.232,-0.458) (0.236,-0.465) (0.240,-0.483) (0.244,-0.481) (0.248,-0.478) (0.252,-0.442) (0.256,-0.418) (0.260,-0.444) (0.264,-0.416) (0.268,-0.365) (0.272,-0.355) (0.276,-0.402) (0.280,-0.423) (0.284,-0.429) (0.288,-0.480) (0.292,-0.484) (0.296,-0.524) (0.300,-0.499) (0.304,-0.478) (0.308,-0.480) (0.312,-0.520) (0.316,-0.511) (0.320,-0.531) (0.324,-0.553) (0.328,-0.518) (0.332,-0.541) (0.336,-0.585) (0.340,-0.560) (0.344,-0.504) (0.348,-0.453) (0.352,-0.428) (0.356,-0.362) (0.360,-0.373) (0.364,-0.304) (0.368,-0.238) (0.372,-0.249) (0.376,-0.236) (0.380,-0.277) (0.384,-0.309) (0.388,-0.291) (0.392,-0.328) (0.396,-0.321) (0.400,-0.307) (0.404,-0.302) (0.408,-0.271) (0.412,-0.209) (0.416,-0.203) (0.420,-0.171) (0.424,-0.123) (0.428,-0.137) (0.432,-0.157) (0.436,-0.234) (0.440,-0.219) (0.444,-0.185) (0.448,-0.171) (0.452,-0.103) (0.456,-0.054) (0.460,-0.027) (0.464,-0.028) (0.468,-0.055) (0.472,-0.050) (0.476,-0.033) (0.480,-0.034) (0.484,-0.043) (0.488,-0.054) (0.492,-0.031) (0.496,-0.084) (0.500,-0.094) (0.504,-0.090) (0.508,-0.067) (0.512,-0.044) (0.516,-0.050) (0.520,-0.068) (0.524,-0.104) (0.528,-0.066) (0.532,-0.047) (0.536,-0.039) (0.540,-0.061) (0.544,-0.096) (0.548,-0.058) (0.552,-0.057) (0.556,-0.041) (0.560,0.017) (0.564,-0.031) (0.568,-0.054) (0.572,-0.042) (0.576,-0.021) (0.580,-0.053) (0.584,-0.054) (0.588,-0.072) (0.592,-0.090) (0.596,-0.077) (0.600,-0.069) (0.604,-0.031) (0.608,-0.011) (0.612,0.038) (0.616,0.013) (0.620,0.026) (0.624,-0.018) (0.628,-0.070) (0.632,-0.114) (0.636,-0.091) (0.640,-0.084) (0.644,-0.135) (0.648,-0.094) (0.652,-0.063) (0.656,-0.069) (0.660,-0.076) (0.664,-0.077) (0.668,-0.115) (0.672,-0.041) (0.676,-0.056) (0.680,-0.062) (0.684,-0.072) (0.688,-0.094) (0.692,-0.123) (0.696,-0.112) (0.700,-0.152) (0.704,-0.185) (0.708,-0.211) (0.712,-0.185) (0.716,-0.217) (0.720,-0.249) (0.724,-0.239) (0.728,-0.275) (0.732,-0.286) (0.736,-0.258) (0.740,-0.245) (0.744,-0.244) (0.748,-0.196) (0.752,-0.210) (0.756,-0.229) (0.760,-0.285) (0.764,-0.285) (0.768,-0.265) (0.772,-0.287) (0.776,-0.228) (0.780,-0.196) (0.784,-0.167) (0.788,-0.205) (0.792,-0.196) (0.796,-0.185) (0.800,-0.250) (0.804,-0.252) (0.808,-0.260) (0.812,-0.315) (0.816,-0.300) (0.820,-0.235) (0.824,-0.212) (0.828,-0.240) (0.832,-0.248) (0.836,-0.213) (0.840,-0.240) (0.844,-0.229) (0.848,-0.228) (0.852,-0.223) (0.856,-0.171) (0.860,-0.219) (0.864,-0.182) (0.868,-0.174) (0.872,-0.177) (0.876,-0.230) (0.880,-0.221) (0.884,-0.234) (0.888,-0.328) (0.892,-0.371) (0.896,-0.410) (0.900,-0.442) (0.904,-0.462) (0.908,-0.489) (0.912,-0.439) (0.916,-0.406) (0.920,-0.409) (0.924,-0.365) (0.928,-0.297) (0.932,-0.282) (0.936,-0.324) (0.940,-0.312) (0.944,-0.267) (0.948,-0.299) (0.952,-0.299) (0.956,-0.270) (0.960,-0.232) (0.964,-0.205) (0.968,-0.193) (0.972,-0.198) (0.976,-0.216) (0.980,-0.184) (0.984,-0.155) (0.988,-0.170) (0.992,-0.192) (0.996,-0.152) (1.000,-0.121) (1.004,-0.145) (1.008,-0.198) (1.012,-0.194) (1.016,-0.168) (1.020,-0.138) (1.024,-0.112) (1.028,-0.161) (1.032,-0.160) (1.036,-0.172) (1.040,-0.196) (1.044,-0.113) (1.048,-0.113) (1.052,-0.105) (1.056,-0.115) (1.060,-0.093) (1.064,-0.056) (1.068,-0.066) (1.072,-0.067) (1.076,-0.081) (1.080,-0.114) (1.084,-0.120) (1.088,-0.109) (1.092,-0.109) (1.096,-0.117) (1.100,-0.176) (1.104,-0.166) (1.108,-0.161) (1.112,-0.113) (1.116,-0.134) (1.120,-0.178) (1.124,-0.240) (1.128,-0.261) (1.132,-0.264) (1.136,-0.257) (1.140,-0.308) (1.144,-0.327) (1.148,-0.296) (1.152,-0.294) (1.156,-0.306) (1.160,-0.319) (1.164,-0.369) (1.168,-0.415) (1.172,-0.432) (1.176,-0.394) (1.180,-0.359) (1.184,-0.365) (1.188,-0.346) (1.192,-0.333) (1.196,-0.297) (1.200,-0.320) (1.204,-0.313) (1.208,-0.283) (1.212,-0.324) (1.216,-0.327) (1.220,-0.320) (1.224,-0.307) (1.228,-0.291) (1.232,-0.273) (1.236,-0.242) (1.240,-0.198) (1.244,-0.186) (1.248,-0.117) (1.252,-0.132) (1.256,-0.139) (1.260,-0.129) (1.264,-0.153) (1.268,-0.200) (1.272,-0.242) (1.276,-0.205) (1.280,-0.226) (1.284,-0.268) (1.288,-0.209) (1.292,-0.197) (1.296,-0.191) (1.300,-0.222) (1.304,-0.213) (1.308,-0.229) (1.312,-0.222) (1.316,-0.223) (1.320,-0.231) (1.324,-0.224) (1.328,-0.253) (1.332,-0.267) (1.336,-0.293) (1.340,-0.281) (1.344,-0.259) (1.348,-0.234) (1.352,-0.296) (1.356,-0.242) (1.360,-0.249) (1.364,-0.254) (1.368,-0.256) (1.372,-0.259) (1.376,-0.220) (1.380,-0.216) (1.384,-0.207) (1.388,-0.178) (1.392,-0.137) (1.396,-0.169) (1.400,-0.208) (1.404,-0.270) (1.408,-0.330) (1.412,-0.321) (1.416,-0.319) (1.420,-0.320) (1.424,-0.310) (1.428,-0.300) (1.432,-0.263) (1.436,-0.209) (1.440,-0.235) (1.444,-0.187) (1.448,-0.154) (1.452,-0.179) (1.456,-0.173) (1.460,-0.186) (1.464,-0.239) (1.468,-0.237) (1.472,-0.288) (1.476,-0.286) (1.480,-0.319) (1.484,-0.329) (1.488,-0.303) (1.492,-0.299) (1.496,-0.324) (1.500,-0.316) (1.504,-0.294) (1.508,-0.263) (1.512,-0.289) (1.516,-0.253) (1.520,-0.258) (1.524,-0.275) (1.528,-0.237) (1.532,-0.217) (1.536,-0.195) (1.540,-0.190) (1.544,-0.246) (1.548,-0.231) (1.552,-0.213) (1.556,-0.216) (1.560,-0.162) (1.564,-0.157) (1.568,-0.102) (1.572,-0.064) (1.576,-0.084) (1.580,-0.109) (1.584,-0.118) (1.588,-0.119) (1.592,-0.096) (1.596,-0.113) (1.600,-0.145) (1.604,-0.076) (1.608,-0.074) (1.612,-0.063) (1.616,-0.006) (1.620,-0.001) (1.624,-0.072) (1.628,-0.055) (1.632,-0.073) (1.636,-0.040) (1.640,-0.095) (1.644,-0.087) (1.648,-0.059) (1.652,-0.013) (1.656,0.020) (1.660,0.052) (1.664,0.054) (1.668,0.055) (1.672,0.026) (1.676,0.058) (1.680,0.110) (1.684,0.131) (1.688,0.106) (1.692,0.064) (1.696,0.050) (1.700,0.102) (1.704,0.109) (1.708,0.081) (1.712,0.072) (1.716,0.100) (1.720,0.082) (1.724,0.069) (1.728,0.055) (1.732,0.029) (1.736,0.092) (1.740,0.069) (1.744,0.080) (1.748,0.120) (1.752,0.119) (1.756,0.108) (1.760,0.106) (1.764,0.108) (1.768,0.104) (1.772,0.093) (1.776,0.083) (1.780,0.089) (1.784,0.147) (1.788,0.136) (1.792,0.164) (1.796,0.161) (1.800,0.133) (1.804,0.099) (1.808,0.112) (1.812,0.073) (1.816,0.090) (1.820,0.081) (1.824,0.137) (1.828,0.107) (1.832,0.144) (1.836,0.190) (1.840,0.219) (1.844,0.205) (1.848,0.218) (1.852,0.225) (1.856,0.223) (1.860,0.255) (1.864,0.255) (1.868,0.265) (1.872,0.302) (1.876,0.302) (1.880,0.293) (1.884,0.303) (1.888,0.290) (1.892,0.220) (1.896,0.207) (1.900,0.204) (1.904,0.170) (1.908,0.114) (1.912,0.131) (1.916,0.113) (1.920,0.135) (1.924,0.153) (1.928,0.203) (1.932,0.224) (1.936,0.255) (1.940,0.301) (1.944,0.293) (1.948,0.331) (1.952,0.371) (1.956,0.318) (1.960,0.287) (1.964,0.294) (1.968,0.351) (1.972,0.343) (1.976,0.305) (1.980,0.258) (1.984,0.270) (1.988,0.279) (1.992,0.261) (1.996,0.273) (2.000,0.263) (2.004,0.301) (2.008,0.259) (2.012,0.245) (2.016,0.282) (2.020,0.264) (2.024,0.239) (2.028,0.231) (2.032,0.290) (2.036,0.300) (2.040,0.346) (2.044,0.319) (2.048,0.314) (2.052,0.337) (2.056,0.346) (2.060,0.331) (2.064,0.262) (2.068,0.280) (2.072,0.304) (2.076,0.320) (2.080,0.268) (2.084,0.224) (2.088,0.206) (2.092,0.175) (2.096,0.208) (2.100,0.174) (2.104,0.164) (2.108,0.130) (2.112,0.075) (2.116,0.107) (2.120,0.112) (2.124,0.090) (2.128,0.091) (2.132,0.022) (2.136,0.072) (2.140,0.040) (2.144,-0.020) (2.148,-0.054) (2.152,-0.047) (2.156,-0.046) (2.160,-0.025) (2.164,-0.004) (2.168,-0.002) (2.172,0.030) (2.176,-0.027) (2.180,-0.013) (2.184,0.002) (2.188,0.013) (2.192,0.008) (2.196,-0.009) (2.200,-0.028) (2.204,-0.082) (2.208,-0.098) (2.212,-0.064) (2.216,-0.019) (2.220,-0.034) (2.224,-0.051) (2.228,-0.067) (2.232,-0.072) (2.236,-0.072) (2.240,-0.058) (2.244,-0.013) (2.248,-0.077) (2.252,-0.111) (2.256,-0.126) (2.260,-0.129) (2.264,-0.117) (2.268,-0.061) (2.272,-0.091) (2.276,-0.071) (2.280,-0.073) (2.284,-0.072) (2.288,-0.062) (2.292,-0.035) (2.296,-0.031) (2.300,-0.066) (2.304,-0.051) (2.308,-0.035) (2.312,-0.084) (2.316,-0.070) (2.320,-0.072) (2.324,-0.078) (2.328,-0.075) (2.332,-0.020) (2.336,-0.072) (2.340,-0.066) (2.344,-0.125) (2.348,-0.125) (2.352,-0.096) (2.356,-0.153) (2.360,-0.117) (2.364,-0.102) (2.368,-0.105) (2.372,-0.111) (2.376,-0.111) (2.380,-0.108) (2.384,-0.109) (2.388,-0.075) (2.392,-0.049) (2.396,-0.076) (2.400,-0.041) (2.404,0.018) (2.408,0.071) (2.412,0.082) (2.416,0.034) (2.420,0.020) (2.424,0.019) (2.428,0.018) (2.432,-0.025) (2.436,-0.004) (2.440,0.027) (2.444,-0.012) (2.448,-0.033) (2.452,-0.061) (2.456,-0.057) (2.460,-0.017) (2.464,-0.079) (2.468,-0.025) (2.472,0.009) (2.476,-0.028) (2.480,0.028) (2.484,0.069) (2.488,0.091) (2.492,0.106) (2.496,0.082) (2.500,0.102) (2.504,0.080) (2.508,0.087) (2.512,0.124) (2.516,0.117) (2.520,0.101) (2.524,0.122) (2.528,0.157) (2.532,0.193) (2.536,0.190) (2.540,0.239) (2.544,0.295) (2.548,0.289) (2.552,0.312) (2.556,0.260) (2.560,0.260) (2.564,0.231) (2.568,0.239) (2.572,0.224) (2.576,0.154) (2.580,0.177) (2.584,0.149) (2.588,0.190) (2.592,0.240) (2.596,0.262) (2.600,0.263) (2.604,0.275) (2.608,0.233) (2.612,0.240) (2.616,0.274) (2.620,0.300) (2.624,0.271) (2.628,0.322) (2.632,0.342) (2.636,0.316) (2.640,0.333) (2.644,0.307) (2.648,0.295) (2.652,0.300) (2.656,0.362) (2.660,0.444) (2.664,0.399) (2.668,0.400) (2.672,0.398) (2.676,0.422) (2.680,0.436) (2.684,0.419) (2.688,0.432) (2.692,0.414) (2.696,0.390) (2.700,0.410) (2.704,0.418) (2.708,0.401) (2.712,0.423) (2.716,0.412) (2.720,0.390) (2.724,0.388) (2.728,0.409) (2.732,0.391) (2.736,0.438) (2.740,0.432) (2.744,0.443) (2.748,0.469) (2.752,0.471) (2.756,0.452) (2.760,0.447) (2.764,0.478) (2.768,0.519) (2.772,0.541) (2.776,0.528) (2.780,0.499) (2.784,0.483) (2.788,0.470) (2.792,0.478) (2.796,0.479) (2.800,0.457) (2.804,0.480) (2.808,0.459) (2.812,0.463) (2.816,0.442) (2.820,0.476) (2.824,0.491) (2.828,0.499) (2.832,0.535) (2.836,0.567) (2.840,0.528) (2.844,0.537) (2.848,0.506) (2.852,0.490) (2.856,0.478) (2.860,0.435) (2.864,0.398) (2.868,0.402) (2.872,0.407) (2.876,0.425) (2.880,0.422) (2.884,0.410) (2.888,0.401) (2.892,0.392) (2.896,0.369) (2.900,0.373) (2.904,0.344) (2.908,0.312) (2.912,0.266) (2.916,0.269) (2.920,0.247) (2.924,0.239) (2.928,0.241) (2.932,0.213) (2.936,0.218) (2.940,0.219) (2.944,0.217) (2.948,0.157) (2.952,0.188) (2.956,0.204) (2.960,0.221) (2.964,0.274) (2.968,0.241) (2.972,0.301) (2.976,0.294) (2.980,0.322) (2.984,0.358) (2.988,0.327) (2.992,0.336) (2.996,0.318) (3.000,0.332) (3.004,0.287) (3.008,0.340) (3.012,0.354) (3.016,0.347) (3.020,0.321) (3.024,0.362) (3.028,0.347) (3.032,0.372) (3.036,0.336) (3.040,0.353) (3.044,0.295) (3.048,0.306) (3.052,0.275) (3.056,0.244) (3.060,0.272) (3.064,0.293) (3.068,0.274) (3.072,0.250) (3.076,0.283) (3.080,0.235) (3.084,0.225) (3.088,0.202) (3.092,0.201) (3.096,0.198) (3.100,0.191) (3.104,0.153) (3.108,0.159) (3.112,0.173) (3.116,0.158) (3.120,0.210) (3.124,0.195) (3.128,0.194) (3.132,0.253) (3.136,0.243) (3.140,0.239) (3.144,0.192) (3.148,0.191) (3.152,0.177) (3.156,0.185) (3.160,0.151) (3.164,0.187) (3.168,0.202) (3.172,0.228) (3.176,0.215) (3.180,0.160) (3.184,0.170) (3.188,0.168) (3.192,0.152) (3.196,0.201) (3.200,0.209) (3.204,0.212) (3.208,0.225) (3.212,0.235) (3.216,0.247) (3.220,0.271) (3.224,0.274) (3.228,0.260) (3.232,0.234) (3.236,0.241) (3.240,0.255) (3.244,0.279) (3.248,0.254) (3.252,0.207) (3.256,0.162) (3.260,0.207) (3.264,0.225) (3.268,0.224) (3.272,0.214) (3.276,0.281) (3.280,0.305) (3.284,0.327) (3.288,0.310) (3.292,0.345) (3.296,0.333) (3.300,0.309) (3.304,0.351) (3.308,0.361) (3.312,0.382) (3.316,0.336) (3.320,0.324) (3.324,0.297) (3.328,0.303) (3.332,0.299) (3.336,0.297) (3.340,0.273) (3.344,0.314) (3.348,0.273) (3.352,0.285) (3.356,0.282) (3.360,0.312) (3.364,0.280) (3.368,0.309) (3.372,0.281) (3.376,0.287) (3.380,0.278) (3.384,0.350) (3.388,0.307) (3.392,0.284) (3.396,0.224) (3.400,0.188) (3.404,0.179) (3.408,0.163) (3.412,0.133) (3.416,0.107) (3.420,0.096) (3.424,0.094) (3.428,0.141) (3.432,0.148) (3.436,0.142) (3.440,0.126) (3.444,0.142) (3.448,0.178) (3.452,0.165) (3.456,0.190) (3.460,0.201) (3.464,0.131) (3.468,0.187) (3.472,0.211) (3.476,0.164) (3.480,0.100) (3.484,0.102) (3.488,0.082) (3.492,0.094) (3.496,0.078) (3.500,0.113) (3.504,0.092) (3.508,0.102) (3.512,0.119) (3.516,0.108) (3.520,0.133) (3.524,0.134) (3.528,0.131) (3.532,0.123) (3.536,0.046) (3.540,0.013) (3.544,0.067) (3.548,0.054) (3.552,0.027) (3.556,0.019) (3.560,0.010) (3.564,0.073) (3.568,0.037) (3.572,0.013) (3.576,0.031) (3.580,-0.026) (3.584,-0.078) (3.588,-0.120) (3.592,-0.143) (3.596,-0.113) (3.600,-0.073) (3.604,-0.059) (3.608,0.001) (3.612,-0.060) (3.616,-0.028) (3.620,-0.051) (3.624,-0.037) (3.628,-0.047) (3.632,-0.093) (3.636,-0.058) (3.640,-0.015) (3.644,-0.008) (3.648,-0.034) (3.652,0.028) (3.656,-0.046) (3.660,-0.040) (3.664,-0.068) (3.668,-0.091) (3.672,-0.061) (3.676,-0.076) (3.680,-0.045) (3.684,-0.038) (3.688,0.006) (3.692,0.035) (3.696,0.062) (3.700,0.062) (3.704,0.060) (3.708,0.030) (3.712,0.044) (3.716,0.066) (3.720,0.045) (3.724,0.056) (3.728,0.032) (3.732,0.031) (3.736,-0.003) (3.740,-0.003) (3.744,0.030) (3.748,0.038) (3.752,-0.016) (3.756,0.004) (3.760,-0.007) (3.764,0.033) (3.768,0.054) (3.772,0.075) (3.776,0.065) (3.780,0.069) (3.784,0.046) (3.788,0.048) (3.792,0.072) (3.796,0.025) (3.800,0.014) (3.804,0.037) (3.808,0.004) (3.812,-0.001) (3.816,0.014) (3.820,0.024) (3.824,0.042) (3.828,0.066) (3.832,0.103) (3.836,0.093) (3.840,0.108) (3.844,0.120) (3.848,0.083) (3.852,0.070) (3.856,0.093) (3.860,0.104) (3.864,0.125) (3.868,0.124) (3.872,0.123) (3.876,0.152) (3.880,0.092) (3.884,0.040) (3.888,0.041) (3.892,0.049) (3.896,0.069) (3.900,0.090) (3.904,0.095) (3.908,0.059) (3.912,0.075) (3.916,0.081) (3.920,0.050) (3.924,-0.003) (3.928,-0.017) (3.932,0.003) (3.936,0.003) (3.940,0.038) (3.944,0.025) (3.948,0.003) (3.952,-0.016) (3.956,-0.016) (3.960,-0.008) (3.964,0.019) (3.968,0.015) (3.972,-0.022) (3.976,-0.048) (3.980,-0.041) (3.984,-0.046) (3.988,-0.041) (3.992,-0.060) (3.996,-0.074) (4.000,-0.090) (4.004,-0.110) (4.008,-0.105) (4.012,-0.055) (4.016,-0.056) (4.020,-0.052) (4.024,-0.111) (4.028,-0.096) (4.032,-0.104) (4.036,-0.065) (4.040,-0.049) (4.044,-0.006) (4.048,-0.036) (4.052,-0.015) (4.056,0.013) (4.060,0.001) (4.064,0.038) (4.068,-0.004) (4.072,0.004) (4.076,-0.009) (4.080,0.017) (4.084,-0.043) (4.088,-0.016) (4.092,0.036) (4.096,0.005) (4.100,0.034) (4.104,0.008) (4.108,0.036) (4.112,0.072) (4.116,0.038) (4.120,0.090) (4.124,0.088) (4.128,0.091) (4.132,0.117) (4.136,0.147) (4.140,0.151) (4.144,0.173) (4.148,0.195) (4.152,0.163) (4.156,0.174) (4.160,0.183) (4.164,0.187) (4.168,0.210) (4.172,0.197) (4.176,0.145) (4.180,0.167) (4.184,0.181) (4.188,0.184) (4.192,0.171) (4.196,0.241) (4.200,0.246) (4.204,0.217) (4.208,0.229) (4.212,0.247) (4.216,0.206) (4.220,0.229) (4.224,0.166) (4.228,0.156) (4.232,0.184) (4.236,0.220) (4.240,0.234) (4.244,0.260) (4.248,0.237) (4.252,0.242) (4.256,0.264) (4.260,0.213) (4.264,0.224) (4.268,0.245) (4.272,0.282) (4.276,0.308) (4.280,0.341) (4.284,0.369) (4.288,0.344) (4.292,0.318) (4.296,0.302) (4.300,0.266) (4.304,0.293) (4.308,0.347) (4.312,0.389) (4.316,0.398) (4.320,0.433) (4.324,0.437) (4.328,0.453) (4.332,0.490) (4.336,0.460) (4.340,0.444) (4.344,0.459) (4.348,0.551) (4.352,0.573) (4.356,0.599) (4.360,0.645) (4.364,0.633) (4.368,0.570) (4.372,0.536) (4.376,0.507) (4.380,0.562) (4.384,0.556) (4.388,0.549) (4.392,0.542) (4.396,0.534) (4.400,0.532) (4.404,0.533) (4.408,0.568) (4.412,0.573) (4.416,0.527) (4.420,0.551) (4.424,0.565) (4.428,0.552) (4.432,0.557) (4.436,0.536) (4.440,0.604) (4.444,0.637) (4.448,0.645) (4.452,0.618) (4.456,0.637) (4.460,0.655) (4.464,0.656) (4.468,0.647) (4.472,0.707) (4.476,0.695) (4.480,0.723) (4.484,0.760) (4.488,0.727) (4.492,0.734) (4.496,0.764) (4.500,0.766) (4.504,0.721) (4.508,0.740) (4.512,0.803) (4.516,0.818) (4.520,0.828) (4.524,0.806) (4.528,0.801) (4.532,0.815) (4.536,0.776) (4.540,0.748) (4.544,0.750) (4.548,0.750) (4.552,0.753) (4.556,0.751) (4.560,0.794) (4.564,0.791) (4.568,0.790) (4.572,0.765) (4.576,0.740) (4.580,0.737) (4.584,0.814) (4.588,0.849) (4.592,0.810) (4.596,0.858) (4.600,0.859) (4.604,0.821) (4.608,0.830) (4.612,0.801) (4.616,0.781) (4.620,0.752) (4.624,0.748) (4.628,0.687) (4.632,0.717) (4.636,0.768) (4.640,0.763) (4.644,0.766) (4.648,0.779) (4.652,0.773) (4.656,0.766) (4.660,0.788) (4.664,0.820) (4.668,0.809) (4.672,0.830) (4.676,0.841) (4.680,0.823) (4.684,0.822) (4.688,0.765) (4.692,0.732) (4.696,0.741) (4.700,0.759) (4.704,0.791) (4.708,0.836) (4.712,0.803) (4.716,0.813) (4.720,0.764) (4.724,0.762) (4.728,0.744) (4.732,0.758) (4.736,0.754) (4.740,0.756) (4.744,0.738) (4.748,0.738) (4.752,0.778) (4.756,0.802) (4.760,0.803) (4.764,0.753) (4.768,0.700) (4.772,0.698) (4.776,0.709) (4.780,0.727) (4.784,0.734) (4.788,0.711) (4.792,0.660) (4.796,0.684) (4.800,0.702) (4.804,0.738) (4.808,0.688) (4.812,0.707) (4.816,0.728) (4.820,0.696) (4.824,0.666) (4.828,0.635) (4.832,0.610) (4.836,0.606) (4.840,0.594) (4.844,0.634) (4.848,0.646) (4.852,0.628) (4.856,0.634) (4.860,0.612) (4.864,0.545) (4.868,0.554) (4.872,0.537) (4.876,0.575) (4.880,0.571) (4.884,0.613) (4.888,0.638) (4.892,0.592) (4.896,0.643) (4.900,0.630) (4.904,0.629) (4.908,0.595) (4.912,0.578) (4.916,0.658) (4.920,0.663) (4.924,0.555) (4.928,0.608) (4.932,0.615) (4.936,0.596) (4.940,0.616) (4.944,0.627) (4.948,0.607) (4.952,0.654) (4.956,0.672) (4.960,0.706) (4.964,0.713) (4.968,0.712) (4.972,0.717) (4.976,0.687) (4.980,0.661) (4.984,0.688) (4.988,0.686) (4.992,0.703) (4.996,0.703) (5.000,0.689) (5.004,0.721) (5.008,0.703) (5.012,0.674) (5.016,0.673) (5.020,0.705) (5.024,0.736) (5.028,0.702) (5.032,0.670) (5.036,0.652) (5.040,0.620) (5.044,0.618) (5.048,0.643) (5.052,0.632) (5.056,0.576) (5.060,0.519) (5.064,0.466) (5.068,0.449) (5.072,0.461) (5.076,0.410) (5.080,0.418) (5.084,0.435) (5.088,0.508) (5.092,0.471) (5.096,0.471) (5.100,0.491) (5.104,0.502) (5.108,0.526) (5.112,0.537) (5.116,0.558) (5.120,0.553) (5.124,0.549) (5.128,0.495) (5.132,0.556) (5.136,0.512) (5.140,0.516) (5.144,0.519) (5.148,0.492) (5.152,0.442) (5.156,0.449) (5.160,0.396) (5.164,0.429) (5.168,0.511) (5.172,0.554) (5.176,0.472) (5.180,0.485) (5.184,0.488) (5.188,0.481) (5.192,0.508) (5.196,0.493) (5.200,0.511) (5.204,0.490) (5.208,0.459) (5.212,0.477) (5.216,0.511) (5.220,0.493) (5.224,0.543) (5.228,0.538) (5.232,0.511) (5.236,0.488) (5.240,0.470) (5.244,0.492) (5.248,0.516) (5.252,0.520) (5.256,0.563) (5.260,0.536) (5.264,0.551) (5.268,0.561) (5.272,0.494) (5.276,0.506) (5.280,0.492) (5.284,0.541) (5.288,0.528) (5.292,0.571) (5.296,0.589) (5.300,0.544) (5.304,0.581) (5.308,0.604) (5.312,0.617) (5.316,0.614) (5.320,0.675) (5.324,0.669) (5.328,0.707) (5.332,0.730) (5.336,0.727) (5.340,0.708) (5.344,0.722) (5.348,0.723) (5.352,0.690) (5.356,0.706) (5.360,0.726) (5.364,0.732) (5.368,0.682) (5.372,0.703) (5.376,0.697) (5.380,0.727) (5.384,0.720) (5.388,0.703) (5.392,0.701) (5.396,0.701) (5.400,0.771) (5.404,0.742) (5.408,0.697) (5.412,0.694) (5.416,0.693) (5.420,0.638) (5.424,0.586) (5.428,0.559) (5.432,0.551) (5.436,0.541) (5.440,0.567) (5.444,0.566) (5.448,0.543) (5.452,0.540) (5.456,0.548) (5.460,0.567) (5.464,0.477) (5.468,0.478) (5.472,0.487) (5.476,0.532) (5.480,0.520) (5.484,0.523) (5.488,0.473) (5.492,0.497) (5.496,0.528) (5.500,0.500) (5.504,0.502) (5.508,0.487) (5.512,0.520) (5.516,0.512) (5.520,0.517) (5.524,0.524) (5.528,0.513) (5.532,0.519) (5.536,0.570) (5.540,0.570) (5.544,0.546) (5.548,0.583) (5.552,0.642) (5.556,0.672) (5.560,0.674) (5.564,0.674) (5.568,0.695) (5.572,0.728) (5.576,0.819) (5.580,0.894) (5.584,0.862) (5.588,0.888) (5.592,0.935) (5.596,0.957) (5.600,0.956) (5.604,0.937) (5.608,0.958) (5.612,0.981) (5.616,0.993) (5.620,0.966) (5.624,0.906) (5.628,0.960) (5.632,0.930) (5.636,0.926) (5.640,0.884) (5.644,0.914) (5.648,0.900) (5.652,0.903) (5.656,0.983) (5.660,1.017) (5.664,1.029) (5.668,0.973) (5.672,1.005) (5.676,1.037) (5.680,1.074) (5.684,1.085) (5.688,1.046) (5.692,1.029) (5.696,1.062) (5.700,1.091) (5.704,1.112) (5.708,1.186) (5.712,1.222) (5.716,1.231) (5.720,1.242) (5.724,1.189) (5.728,1.174) (5.732,1.206) (5.736,1.153) (5.740,1.165) (5.744,1.197) (5.748,1.189) (5.752,1.210) (5.756,1.181) (5.760,1.238) (5.764,1.228) (5.768,1.157) (5.772,1.173) (5.776,1.159) (5.780,1.212) (5.784,1.236) (5.788,1.225) (5.792,1.276) (5.796,1.266) (5.800,1.225) (5.804,1.262) (5.808,1.307) (5.812,1.299) (5.816,1.332) (5.820,1.390) (5.824,1.369) (5.828,1.271) (5.832,1.289) (5.836,1.281) (5.840,1.310) (5.844,1.317) (5.848,1.303) (5.852,1.270) (5.856,1.233) (5.860,1.214) (5.864,1.184) (5.868,1.208) (5.872,1.230) (5.876,1.210) (5.880,1.248) (5.884,1.236) (5.888,1.220) (5.892,1.247) (5.896,1.256) (5.900,1.237) (5.904,1.264) (5.908,1.317) (5.912,1.275) (5.916,1.266) (5.920,1.199) (5.924,1.195) (5.928,1.273) (5.932,1.281) (5.936,1.276) (5.940,1.281) (5.944,1.249) (5.948,1.255) (5.952,1.243) (5.956,1.218) (5.960,1.246) (5.964,1.238) (5.968,1.246) (5.972,1.225) (5.976,1.230) (5.980,1.248) (5.984,1.247) (5.988,1.296) (5.992,1.295) (5.996,1.308) (6.000,1.304)};
    \node[myred, font=\small] at (3.1,1.72) {$x$};

  \node[scale=0.8] at (3.45,1.72) {$\preceq$};

    \draw[myblue, line width=0.6pt, line join=round] plot coordinates {(0.000,0.212) (0.004,0.243) (0.008,0.224) (0.012,0.192) (0.016,0.194) (0.020,0.183) (0.024,0.163) (0.028,0.213) (0.032,0.204) (0.036,0.213) (0.040,0.173) (0.044,0.146) (0.048,0.138) (0.052,0.186) (0.056,0.174) (0.060,0.182) (0.064,0.119) (0.068,0.099) (0.072,0.092) (0.076,0.119) (0.080,0.135) (0.084,0.084) (0.088,0.115) (0.092,0.137) (0.096,0.162) (0.100,0.126) (0.104,0.180) (0.108,0.121) (0.112,0.118) (0.116,0.118) (0.120,0.135) (0.124,0.141) (0.128,0.122) (0.132,0.174) (0.136,0.111) (0.140,0.088) (0.144,0.074) (0.148,0.107) (0.152,0.118) (0.156,0.063) (0.160,0.132) (0.164,0.129) (0.168,0.168) (0.172,0.233) (0.176,0.221) (0.180,0.232) (0.184,0.183) (0.188,0.161) (0.192,0.211) (0.196,0.203) (0.200,0.235) (0.204,0.235) (0.208,0.258) (0.212,0.280) (0.216,0.347) (0.220,0.366) (0.224,0.372) (0.228,0.392) (0.232,0.373) (0.236,0.340) (0.240,0.260) (0.244,0.283) (0.248,0.291) (0.252,0.281) (0.256,0.311) (0.260,0.324) (0.264,0.315) (0.268,0.339) (0.272,0.358) (0.276,0.384) (0.280,0.422) (0.284,0.435) (0.288,0.459) (0.292,0.458) (0.296,0.428) (0.300,0.423) (0.304,0.425) (0.308,0.394) (0.312,0.393) (0.316,0.403) (0.320,0.375) (0.324,0.372) (0.328,0.379) (0.332,0.378) (0.336,0.419) (0.340,0.386) (0.344,0.455) (0.348,0.441) (0.352,0.497) (0.356,0.501) (0.360,0.563) (0.364,0.562) (0.368,0.554) (0.372,0.516) (0.376,0.499) (0.380,0.522) (0.384,0.528) (0.388,0.535) (0.392,0.535) (0.396,0.535) (0.400,0.557) (0.404,0.562) (0.408,0.537) (0.412,0.574) (0.416,0.574) (0.420,0.571) (0.424,0.577) (0.428,0.562) (0.432,0.543) (0.436,0.510) (0.440,0.542) (0.444,0.495) (0.448,0.502) (0.452,0.449) (0.456,0.405) (0.460,0.438) (0.464,0.437) (0.468,0.458) (0.472,0.460) (0.476,0.458) (0.480,0.485) (0.484,0.495) (0.488,0.531) (0.492,0.559) (0.496,0.529) (0.500,0.491) (0.504,0.536) (0.508,0.572) (0.512,0.553) (0.516,0.585) (0.520,0.547) (0.524,0.501) (0.528,0.537) (0.532,0.526) (0.536,0.486) (0.540,0.452) (0.544,0.490) (0.548,0.546) (0.552,0.535) (0.556,0.591) (0.560,0.557) (0.564,0.470) (0.568,0.462) (0.572,0.502) (0.576,0.507) (0.580,0.492) (0.584,0.517) (0.588,0.507) (0.592,0.517) (0.596,0.480) (0.600,0.505) (0.604,0.478) (0.608,0.460) (0.612,0.519) (0.616,0.521) (0.620,0.576) (0.624,0.531) (0.628,0.529) (0.632,0.542) (0.636,0.573) (0.640,0.617) (0.644,0.638) (0.648,0.674) (0.652,0.680) (0.656,0.716) (0.660,0.748) (0.664,0.765) (0.668,0.803) (0.672,0.808) (0.676,0.805) (0.680,0.780) (0.684,0.767) (0.688,0.784) (0.692,0.727) (0.696,0.743) (0.700,0.730) (0.704,0.759) (0.708,0.731) (0.712,0.734) (0.716,0.697) (0.720,0.680) (0.724,0.691) (0.728,0.748) (0.732,0.767) (0.736,0.773) (0.740,0.734) (0.744,0.751) (0.748,0.724) (0.752,0.756) (0.756,0.780) (0.760,0.811) (0.764,0.733) (0.768,0.693) (0.772,0.668) (0.776,0.659) (0.780,0.650) (0.784,0.643) (0.788,0.659) (0.792,0.688) (0.796,0.674) (0.800,0.638) (0.804,0.636) (0.808,0.642) (0.812,0.645) (0.816,0.607) (0.820,0.627) (0.824,0.639) (0.828,0.653) (0.832,0.618) (0.836,0.647) (0.840,0.648) (0.844,0.662) (0.848,0.674) (0.852,0.641) (0.856,0.635) (0.860,0.660) (0.864,0.664) (0.868,0.675) (0.872,0.642) (0.876,0.595) (0.880,0.589) (0.884,0.506) (0.888,0.535) (0.892,0.507) (0.896,0.560) (0.900,0.532) (0.904,0.524) (0.908,0.480) (0.912,0.430) (0.916,0.434) (0.920,0.448) (0.924,0.399) (0.928,0.413) (0.932,0.390) (0.936,0.357) (0.940,0.357) (0.944,0.389) (0.948,0.375) (0.952,0.352) (0.956,0.354) (0.960,0.327) (0.964,0.342) (0.968,0.319) (0.972,0.309) (0.976,0.242) (0.980,0.218) (0.984,0.160) (0.988,0.203) (0.992,0.174) (0.996,0.171) (1.000,0.171) (1.004,0.156) (1.008,0.138) (1.012,0.158) (1.016,0.191) (1.020,0.214) (1.024,0.219) (1.028,0.198) (1.032,0.196) (1.036,0.209) (1.040,0.261) (1.044,0.270) (1.048,0.244) (1.052,0.217) (1.056,0.208) (1.060,0.201) (1.064,0.246) (1.068,0.256) (1.072,0.278) (1.076,0.258) (1.080,0.214) (1.084,0.294) (1.088,0.364) (1.092,0.386) (1.096,0.415) (1.100,0.396) (1.104,0.442) (1.108,0.542) (1.112,0.540) (1.116,0.554) (1.120,0.583) (1.124,0.571) (1.128,0.566) (1.132,0.573) (1.136,0.548) (1.140,0.566) (1.144,0.529) (1.148,0.528) (1.152,0.525) (1.156,0.536) (1.160,0.518) (1.164,0.471) (1.168,0.461) (1.172,0.455) (1.176,0.496) (1.180,0.493) (1.184,0.519) (1.188,0.518) (1.192,0.518) (1.196,0.505) (1.200,0.454) (1.204,0.405) (1.208,0.342) (1.212,0.332) (1.216,0.380) (1.220,0.335) (1.224,0.364) (1.228,0.423) (1.232,0.424) (1.236,0.449) (1.240,0.441) (1.244,0.416) (1.248,0.390) (1.252,0.370) (1.256,0.388) (1.260,0.396) (1.264,0.374) (1.268,0.409) (1.272,0.416) (1.276,0.385) (1.280,0.395) (1.284,0.443) (1.288,0.487) (1.292,0.456) (1.296,0.475) (1.300,0.480) (1.304,0.522) (1.308,0.588) (1.312,0.506) (1.316,0.533) (1.320,0.492) (1.324,0.499) (1.328,0.547) (1.332,0.592) (1.336,0.572) (1.340,0.627) (1.344,0.614) (1.348,0.615) (1.352,0.575) (1.356,0.558) (1.360,0.521) (1.364,0.543) (1.368,0.520) (1.372,0.552) (1.376,0.530) (1.380,0.563) (1.384,0.588) (1.388,0.585) (1.392,0.605) (1.396,0.653) (1.400,0.706) (1.404,0.698) (1.408,0.697) (1.412,0.733) (1.416,0.828) (1.420,0.854) (1.424,0.819) (1.428,0.794) (1.432,0.787) (1.436,0.793) (1.440,0.749) (1.444,0.684) (1.448,0.691) (1.452,0.679) (1.456,0.680) (1.460,0.631) (1.464,0.610) (1.468,0.643) (1.472,0.634) (1.476,0.596) (1.480,0.619) (1.484,0.607) (1.488,0.636) (1.492,0.649) (1.496,0.642) (1.500,0.630) (1.504,0.662) (1.508,0.647) (1.512,0.662) (1.516,0.703) (1.520,0.685) (1.524,0.667) (1.528,0.665) (1.532,0.620) (1.536,0.633) (1.540,0.604) (1.544,0.587) (1.548,0.542) (1.552,0.579) (1.556,0.624) (1.560,0.629) (1.564,0.649) (1.568,0.662) (1.572,0.699) (1.576,0.765) (1.580,0.845) (1.584,0.865) (1.588,0.898) (1.592,0.879) (1.596,0.908) (1.600,0.949) (1.604,0.918) (1.608,0.911) (1.612,0.869) (1.616,0.866) (1.620,0.855) (1.624,0.812) (1.628,0.758) (1.632,0.749) (1.636,0.697) (1.640,0.722) (1.644,0.699) (1.648,0.672) (1.652,0.678) (1.656,0.687) (1.660,0.656) (1.664,0.678) (1.668,0.690) (1.672,0.707) (1.676,0.766) (1.680,0.758) (1.684,0.762) (1.688,0.713) (1.692,0.726) (1.696,0.697) (1.700,0.671) (1.704,0.669) (1.708,0.674) (1.712,0.667) (1.716,0.664) (1.720,0.650) (1.724,0.594) (1.728,0.646) (1.732,0.633) (1.736,0.640) (1.740,0.592) (1.744,0.569) (1.748,0.580) (1.752,0.568) (1.756,0.537) (1.760,0.526) (1.764,0.531) (1.768,0.514) (1.772,0.476) (1.776,0.514) (1.780,0.512) (1.784,0.572) (1.788,0.583) (1.792,0.536) (1.796,0.501) (1.800,0.471) (1.804,0.488) (1.808,0.438) (1.812,0.484) (1.816,0.465) (1.820,0.484) (1.824,0.513) (1.828,0.483) (1.832,0.463) (1.836,0.470) (1.840,0.507) (1.844,0.533) (1.848,0.526) (1.852,0.558) (1.856,0.559) (1.860,0.580) (1.864,0.548) (1.868,0.602) (1.872,0.598) (1.876,0.583) (1.880,0.582) (1.884,0.593) (1.888,0.619) (1.892,0.602) (1.896,0.546) (1.900,0.616) (1.904,0.596) (1.908,0.634) (1.912,0.612) (1.916,0.578) (1.920,0.608) (1.924,0.565) (1.928,0.477) (1.932,0.505) (1.936,0.525) (1.940,0.559) (1.944,0.570) (1.948,0.542) (1.952,0.513) (1.956,0.511) (1.960,0.518) (1.964,0.508) (1.968,0.518) (1.972,0.556) (1.976,0.566) (1.980,0.597) (1.984,0.583) (1.988,0.563) (1.992,0.535) (1.996,0.549) (2.000,0.528) (2.004,0.539) (2.008,0.507) (2.012,0.519) (2.016,0.557) (2.020,0.563) (2.024,0.562) (2.028,0.540) (2.032,0.528) (2.036,0.508) (2.040,0.540) (2.044,0.574) (2.048,0.598) (2.052,0.600) (2.056,0.622) (2.060,0.627) (2.064,0.575) (2.068,0.547) (2.072,0.508) (2.076,0.506) (2.080,0.486) (2.084,0.498) (2.088,0.453) (2.092,0.463) (2.096,0.484) (2.100,0.475) (2.104,0.447) (2.108,0.421) (2.112,0.429) (2.116,0.432) (2.120,0.358) (2.124,0.394) (2.128,0.397) (2.132,0.408) (2.136,0.386) (2.140,0.416) (2.144,0.469) (2.148,0.444) (2.152,0.448) (2.156,0.444) (2.160,0.435) (2.164,0.417) (2.168,0.463) (2.172,0.410) (2.176,0.417) (2.180,0.413) (2.184,0.367) (2.188,0.375) (2.192,0.333) (2.196,0.361) (2.200,0.387) (2.204,0.411) (2.208,0.461) (2.212,0.434) (2.216,0.428) (2.220,0.440) (2.224,0.502) (2.228,0.523) (2.232,0.533) (2.236,0.553) (2.240,0.579) (2.244,0.555) (2.248,0.593) (2.252,0.625) (2.256,0.616) (2.260,0.594) (2.264,0.542) (2.268,0.563) (2.272,0.557) (2.276,0.536) (2.280,0.503) (2.284,0.472) (2.288,0.497) (2.292,0.456) (2.296,0.482) (2.300,0.491) (2.304,0.502) (2.308,0.499) (2.312,0.460) (2.316,0.540) (2.320,0.577) (2.324,0.576) (2.328,0.582) (2.332,0.587) (2.336,0.508) (2.340,0.529) (2.344,0.516) (2.348,0.511) (2.352,0.450) (2.356,0.425) (2.360,0.391) (2.364,0.455) (2.368,0.443) (2.372,0.439) (2.376,0.467) (2.380,0.467) (2.384,0.487) (2.388,0.513) (2.392,0.517) (2.396,0.514) (2.400,0.474) (2.404,0.450) (2.408,0.394) (2.412,0.433) (2.416,0.439) (2.420,0.485) (2.424,0.492) (2.428,0.529) (2.432,0.490) (2.436,0.442) (2.440,0.446) (2.444,0.446) (2.448,0.474) (2.452,0.436) (2.456,0.391) (2.460,0.374) (2.464,0.385) (2.468,0.375) (2.472,0.353) (2.476,0.315) (2.480,0.301) (2.484,0.322) (2.488,0.320) (2.492,0.275) (2.496,0.269) (2.500,0.317) (2.504,0.317) (2.508,0.364) (2.512,0.344) (2.516,0.337) (2.520,0.406) (2.524,0.381) (2.528,0.375) (2.532,0.372) (2.536,0.392) (2.540,0.391) (2.544,0.454) (2.548,0.497) (2.552,0.533) (2.556,0.553) (2.560,0.584) (2.564,0.538) (2.568,0.557) (2.572,0.523) (2.576,0.520) (2.580,0.570) (2.584,0.601) (2.588,0.574) (2.592,0.587) (2.596,0.596) (2.600,0.630) (2.604,0.633) (2.608,0.603) (2.612,0.634) (2.616,0.578) (2.620,0.584) (2.624,0.572) (2.628,0.562) (2.632,0.614) (2.636,0.589) (2.640,0.572) (2.644,0.591) (2.648,0.523) (2.652,0.483) (2.656,0.514) (2.660,0.514) (2.664,0.515) (2.668,0.573) (2.672,0.524) (2.676,0.530) (2.680,0.520) (2.684,0.529) (2.688,0.499) (2.692,0.437) (2.696,0.434) (2.700,0.389) (2.704,0.371) (2.708,0.325) (2.712,0.341) (2.716,0.327) (2.720,0.361) (2.724,0.405) (2.728,0.379) (2.732,0.347) (2.736,0.344) (2.740,0.367) (2.744,0.398) (2.748,0.393) (2.752,0.402) (2.756,0.429) (2.760,0.440) (2.764,0.408) (2.768,0.421) (2.772,0.450) (2.776,0.414) (2.780,0.417) (2.784,0.430) (2.788,0.415) (2.792,0.405) (2.796,0.407) (2.800,0.411) (2.804,0.412) (2.808,0.392) (2.812,0.378) (2.816,0.411) (2.820,0.373) (2.824,0.369) (2.828,0.305) (2.832,0.322) (2.836,0.271) (2.840,0.290) (2.844,0.326) (2.848,0.324) (2.852,0.301) (2.856,0.309) (2.860,0.351) (2.864,0.388) (2.868,0.366) (2.872,0.393) (2.876,0.382) (2.880,0.404) (2.884,0.412) (2.888,0.422) (2.892,0.423) (2.896,0.414) (2.900,0.445) (2.904,0.404) (2.908,0.376) (2.912,0.374) (2.916,0.370) (2.920,0.385) (2.924,0.413) (2.928,0.396) (2.932,0.360) (2.936,0.427) (2.940,0.351) (2.944,0.353) (2.948,0.353) (2.952,0.335) (2.956,0.390) (2.960,0.394) (2.964,0.302) (2.968,0.313) (2.972,0.335) (2.976,0.386) (2.980,0.392) (2.984,0.353) (2.988,0.336) (2.992,0.315) (2.996,0.266) (3.000,0.274) (3.004,0.299) (3.008,0.327) (3.012,0.357) (3.016,0.371) (3.020,0.354) (3.024,0.418) (3.028,0.417) (3.032,0.379) (3.036,0.384) (3.040,0.347) (3.044,0.345) (3.048,0.367) (3.052,0.344) (3.056,0.335) (3.060,0.346) (3.064,0.340) (3.068,0.336) (3.072,0.303) (3.076,0.275) (3.080,0.281) (3.084,0.273) (3.088,0.275) (3.092,0.265) (3.096,0.208) (3.100,0.211) (3.104,0.164) (3.108,0.177) (3.112,0.185) (3.116,0.185) (3.120,0.181) (3.124,0.211) (3.128,0.231) (3.132,0.168) (3.136,0.152) (3.140,0.181) (3.144,0.148) (3.148,0.104) (3.152,0.127) (3.156,0.095) (3.160,0.124) (3.164,0.150) (3.168,0.128) (3.172,0.093) (3.176,0.156) (3.180,0.170) (3.184,0.170) (3.188,0.176) (3.192,0.223) (3.196,0.237) (3.200,0.222) (3.204,0.214) (3.208,0.243) (3.212,0.258) (3.216,0.261) (3.220,0.274) (3.224,0.215) (3.228,0.267) (3.232,0.264) (3.236,0.246) (3.240,0.168) (3.244,0.179) (3.248,0.108) (3.252,0.167) (3.256,0.080) (3.260,0.090) (3.264,0.058) (3.268,0.088) (3.272,-0.007) (3.276,-0.036) (3.280,-0.100) (3.284,-0.117) (3.288,-0.126) (3.292,-0.130) (3.296,-0.112) (3.300,-0.125) (3.304,-0.155) (3.308,-0.154) (3.312,-0.163) (3.316,-0.173) (3.320,-0.168) (3.324,-0.120) (3.328,-0.111) (3.332,-0.078) (3.336,-0.053) (3.340,-0.081) (3.344,-0.034) (3.348,-0.018) (3.352,-0.033) (3.356,-0.018) (3.360,0.019) (3.364,0.036) (3.368,0.072) (3.372,0.054) (3.376,-0.005) (3.380,-0.029) (3.384,-0.031) (3.388,-0.042) (3.392,-0.020) (3.396,-0.078) (3.400,-0.070) (3.404,-0.029) (3.408,-0.018) (3.412,-0.024) (3.416,0.011) (3.420,-0.043) (3.424,-0.028) (3.428,0.028) (3.432,0.013) (3.436,0.025) (3.440,0.024) (3.444,0.021) (3.448,0.041) (3.452,0.072) (3.456,0.018) (3.460,-0.067) (3.464,-0.057) (3.468,-0.083) (3.472,-0.108) (3.476,-0.116) (3.480,-0.119) (3.484,-0.126) (3.488,-0.100) (3.492,-0.118) (3.496,-0.099) (3.500,-0.121) (3.504,-0.133) (3.508,-0.148) (3.512,-0.103) (3.516,-0.077) (3.520,-0.061) (3.524,-0.076) (3.528,-0.095) (3.532,-0.078) (3.536,-0.050) (3.540,-0.025) (3.544,-0.036) (3.548,-0.040) (3.552,-0.065) (3.556,-0.044) (3.560,-0.027) (3.564,0.010) (3.568,0.030) (3.572,0.053) (3.576,0.035) (3.580,-0.005) (3.584,0.028) (3.588,0.028) (3.592,0.056) (3.596,-0.002) (3.600,0.002) (3.604,-0.015) (3.608,0.001) (3.612,-0.085) (3.616,-0.092) (3.620,-0.108) (3.624,-0.136) (3.628,-0.177) (3.632,-0.132) (3.636,-0.128) (3.640,-0.133) (3.644,-0.136) (3.648,-0.119) (3.652,-0.102) (3.656,-0.065) (3.660,-0.133) (3.664,-0.210) (3.668,-0.225) (3.672,-0.276) (3.676,-0.254) (3.680,-0.271) (3.684,-0.199) (3.688,-0.203) (3.692,-0.242) (3.696,-0.255) (3.700,-0.219) (3.704,-0.204) (3.708,-0.205) (3.712,-0.229) (3.716,-0.275) (3.720,-0.295) (3.724,-0.233) (3.728,-0.232) (3.732,-0.273) (3.736,-0.325) (3.740,-0.318) (3.744,-0.279) (3.748,-0.253) (3.752,-0.277) (3.756,-0.328) (3.760,-0.305) (3.764,-0.332) (3.768,-0.300) (3.772,-0.352) (3.776,-0.356) (3.780,-0.387) (3.784,-0.410) (3.788,-0.366) (3.792,-0.417) (3.796,-0.472) (3.800,-0.466) (3.804,-0.445) (3.808,-0.456) (3.812,-0.417) (3.816,-0.419) (3.820,-0.491) (3.824,-0.501) (3.828,-0.468) (3.832,-0.486) (3.836,-0.507) (3.840,-0.472) (3.844,-0.398) (3.848,-0.387) (3.852,-0.394) (3.856,-0.348) (3.860,-0.332) (3.864,-0.308) (3.868,-0.312) (3.872,-0.285) (3.876,-0.290) (3.880,-0.234) (3.884,-0.219) (3.888,-0.225) (3.892,-0.177) (3.896,-0.191) (3.900,-0.279) (3.904,-0.320) (3.908,-0.343) (3.912,-0.326) (3.916,-0.290) (3.920,-0.312) (3.924,-0.286) (3.928,-0.280) (3.932,-0.298) (3.936,-0.290) (3.940,-0.350) (3.944,-0.373) (3.948,-0.335) (3.952,-0.367) (3.956,-0.344) (3.960,-0.358) (3.964,-0.355) (3.968,-0.408) (3.972,-0.386) (3.976,-0.398) (3.980,-0.430) (3.984,-0.313) (3.988,-0.384) (3.992,-0.385) (3.996,-0.378) (4.000,-0.332) (4.004,-0.375) (4.008,-0.357) (4.012,-0.342) (4.016,-0.340) (4.020,-0.326) (4.024,-0.336) (4.028,-0.355) (4.032,-0.322) (4.036,-0.357) (4.040,-0.340) (4.044,-0.366) (4.048,-0.393) (4.052,-0.348) (4.056,-0.355) (4.060,-0.395) (4.064,-0.400) (4.068,-0.386) (4.072,-0.414) (4.076,-0.412) (4.080,-0.448) (4.084,-0.505) (4.088,-0.474) (4.092,-0.457) (4.096,-0.416) (4.100,-0.345) (4.104,-0.286) (4.108,-0.274) (4.112,-0.274) (4.116,-0.291) (4.120,-0.244) (4.124,-0.233) (4.128,-0.237) (4.132,-0.270) (4.136,-0.287) (4.140,-0.294) (4.144,-0.303) (4.148,-0.308) (4.152,-0.295) (4.156,-0.241) (4.160,-0.217) (4.164,-0.207) (4.168,-0.192) (4.172,-0.158) (4.176,-0.106) (4.180,-0.144) (4.184,-0.168) (4.188,-0.151) (4.192,-0.090) (4.196,-0.066) (4.200,-0.037) (4.204,-0.055) (4.208,-0.032) (4.212,0.007) (4.216,-0.016) (4.220,-0.013) (4.224,0.037) (4.228,0.050) (4.232,0.035) (4.236,0.052) (4.240,0.047) (4.244,0.060) (4.248,0.099) (4.252,0.091) (4.256,0.110) (4.260,0.106) (4.264,0.045) (4.268,-0.002) (4.272,-0.001) (4.276,0.028) (4.280,0.068) (4.284,0.003) (4.288,0.016) (4.292,-0.026) (4.296,-0.111) (4.300,-0.125) (4.304,-0.156) (4.308,-0.130) (4.312,-0.123) (4.316,-0.163) (4.320,-0.185) (4.324,-0.151) (4.328,-0.137) (4.332,-0.114) (4.336,-0.141) (4.340,-0.173) (4.344,-0.135) (4.348,-0.097) (4.352,-0.110) (4.356,-0.139) (4.360,-0.119) (4.364,-0.103) (4.368,-0.164) (4.372,-0.146) (4.376,-0.121) (4.380,-0.143) (4.384,-0.096) (4.388,-0.089) (4.392,-0.078) (4.396,-0.118) (4.400,-0.143) (4.404,-0.148) (4.408,-0.135) (4.412,-0.123) (4.416,-0.135) (4.420,-0.118) (4.424,-0.163) (4.428,-0.169) (4.432,-0.191) (4.436,-0.213) (4.440,-0.206) (4.444,-0.205) (4.448,-0.187) (4.452,-0.154) (4.456,-0.212) (4.460,-0.185) (4.464,-0.201) (4.468,-0.198) (4.472,-0.201) (4.476,-0.187) (4.480,-0.218) (4.484,-0.250) (4.488,-0.277) (4.492,-0.293) (4.496,-0.288) (4.500,-0.283) (4.504,-0.297) (4.508,-0.270) (4.512,-0.279) (4.516,-0.307) (4.520,-0.291) (4.524,-0.339) (4.528,-0.324) (4.532,-0.297) (4.536,-0.296) (4.540,-0.237) (4.544,-0.255) (4.548,-0.312) (4.552,-0.267) (4.556,-0.268) (4.560,-0.249) (4.564,-0.245) (4.568,-0.265) (4.572,-0.245) (4.576,-0.289) (4.580,-0.261) (4.584,-0.280) (4.588,-0.265) (4.592,-0.242) (4.596,-0.226) (4.600,-0.219) (4.604,-0.176) (4.608,-0.173) (4.612,-0.182) (4.616,-0.165) (4.620,-0.190) (4.624,-0.224) (4.628,-0.282) (4.632,-0.268) (4.636,-0.281) (4.640,-0.237) (4.644,-0.249) (4.648,-0.222) (4.652,-0.225) (4.656,-0.235) (4.660,-0.222) (4.664,-0.232) (4.668,-0.224) (4.672,-0.249) (4.676,-0.288) (4.680,-0.226) (4.684,-0.219) (4.688,-0.246) (4.692,-0.245) (4.696,-0.256) (4.700,-0.250) (4.704,-0.282) (4.708,-0.276) (4.712,-0.271) (4.716,-0.255) (4.720,-0.260) (4.724,-0.264) (4.728,-0.236) (4.732,-0.177) (4.736,-0.167) (4.740,-0.142) (4.744,-0.121) (4.748,-0.134) (4.752,-0.062) (4.756,-0.097) (4.760,-0.167) (4.764,-0.164) (4.768,-0.181) (4.772,-0.203) (4.776,-0.247) (4.780,-0.241) (4.784,-0.264) (4.788,-0.280) (4.792,-0.320) (4.796,-0.284) (4.800,-0.310) (4.804,-0.319) (4.808,-0.332) (4.812,-0.349) (4.816,-0.351) (4.820,-0.379) (4.824,-0.337) (4.828,-0.339) (4.832,-0.377) (4.836,-0.359) (4.840,-0.324) (4.844,-0.361) (4.848,-0.412) (4.852,-0.424) (4.856,-0.435) (4.860,-0.427) (4.864,-0.465) (4.868,-0.435) (4.872,-0.422) (4.876,-0.477) (4.880,-0.475) (4.884,-0.500) (4.888,-0.521) (4.892,-0.554) (4.896,-0.504) (4.900,-0.486) (4.904,-0.504) (4.908,-0.525) (4.912,-0.586) (4.916,-0.499) (4.920,-0.535) (4.924,-0.493) (4.928,-0.487) (4.932,-0.510) (4.936,-0.491) (4.940,-0.508) (4.944,-0.516) (4.948,-0.489) (4.952,-0.523) (4.956,-0.474) (4.960,-0.542) (4.964,-0.510) (4.968,-0.520) (4.972,-0.548) (4.976,-0.601) (4.980,-0.629) (4.984,-0.625) (4.988,-0.659) (4.992,-0.660) (4.996,-0.631) (5.000,-0.654) (5.004,-0.651) (5.008,-0.688) (5.012,-0.693) (5.016,-0.666) (5.020,-0.723) (5.024,-0.759) (5.028,-0.740) (5.032,-0.699) (5.036,-0.682) (5.040,-0.660) (5.044,-0.671) (5.048,-0.698) (5.052,-0.679) (5.056,-0.713) (5.060,-0.708) (5.064,-0.735) (5.068,-0.721) (5.072,-0.732) (5.076,-0.792) (5.080,-0.783) (5.084,-0.782) (5.088,-0.802) (5.092,-0.821) (5.096,-0.842) (5.100,-0.910) (5.104,-0.894) (5.108,-0.891) (5.112,-0.835) (5.116,-0.781) (5.120,-0.772) (5.124,-0.851) (5.128,-0.878) (5.132,-0.864) (5.136,-0.899) (5.140,-0.867) (5.144,-0.855) (5.148,-0.858) (5.152,-0.887) (5.156,-0.866) (5.160,-0.857) (5.164,-0.909) (5.168,-0.890) (5.172,-0.937) (5.176,-0.941) (5.180,-0.874) (5.184,-0.911) (5.188,-0.873) (5.192,-0.938) (5.196,-0.992) (5.200,-1.042) (5.204,-1.075) (5.208,-1.119) (5.212,-1.107) (5.216,-1.072) (5.220,-1.093) (5.224,-1.134) (5.228,-1.149) (5.232,-1.165) (5.236,-1.218) (5.240,-1.234) (5.244,-1.252) (5.248,-1.247) (5.252,-1.199) (5.256,-1.208) (5.260,-1.208) (5.264,-1.183) (5.268,-1.123) (5.272,-1.129) (5.276,-1.156) (5.280,-1.152) (5.284,-1.125) (5.288,-1.118) (5.292,-1.088) (5.296,-1.102) (5.300,-1.111) (5.304,-1.138) (5.308,-1.139) (5.312,-1.155) (5.316,-1.167) (5.320,-1.215) (5.324,-1.223) (5.328,-1.236) (5.332,-1.254) (5.336,-1.265) (5.340,-1.286) (5.344,-1.280) (5.348,-1.275) (5.352,-1.308) (5.356,-1.294) (5.360,-1.312) (5.364,-1.311) (5.368,-1.294) (5.372,-1.335) (5.376,-1.390) (5.380,-1.373) (5.384,-1.373) (5.388,-1.312) (5.392,-1.304) (5.396,-1.312) (5.400,-1.294) (5.404,-1.288) (5.408,-1.293) (5.412,-1.233) (5.416,-1.215) (5.420,-1.226) (5.424,-1.251) (5.428,-1.235) (5.432,-1.223) (5.436,-1.222) (5.440,-1.212) (5.444,-1.192) (5.448,-1.150) (5.452,-1.079) (5.456,-1.128) (5.460,-1.154) (5.464,-1.173) (5.468,-1.157) (5.472,-1.131) (5.476,-1.066) (5.480,-1.078) (5.484,-1.085) (5.488,-1.082) (5.492,-1.078) (5.496,-1.110) (5.500,-1.099) (5.504,-1.083) (5.508,-1.103) (5.512,-1.117) (5.516,-1.064) (5.520,-1.119) (5.524,-1.109) (5.528,-1.075) (5.532,-1.051) (5.536,-1.031) (5.540,-1.013) (5.544,-1.020) (5.548,-1.007) (5.552,-0.983) (5.556,-0.940) (5.560,-0.971) (5.564,-1.045) (5.568,-1.065) (5.572,-1.095) (5.576,-1.091) (5.580,-1.100) (5.584,-1.103) (5.588,-1.062) (5.592,-1.093) (5.596,-1.087) (5.600,-1.089) (5.604,-1.109) (5.608,-1.133) (5.612,-1.158) (5.616,-1.145) (5.620,-1.145) (5.624,-1.120) (5.628,-1.051) (5.632,-1.031) (5.636,-1.038) (5.640,-1.029) (5.644,-1.053) (5.648,-1.083) (5.652,-1.095) (5.656,-1.082) (5.660,-1.114) (5.664,-1.122) (5.668,-1.191) (5.672,-1.228) (5.676,-1.185) (5.680,-1.165) (5.684,-1.091) (5.688,-1.089) (5.692,-1.068) (5.696,-1.042) (5.700,-1.069) (5.704,-1.073) (5.708,-1.043) (5.712,-1.069) (5.716,-1.062) (5.720,-0.995) (5.724,-1.003) (5.728,-1.019) (5.732,-1.011) (5.736,-1.014) (5.740,-0.945) (5.744,-0.989) (5.748,-1.016) (5.752,-0.977) (5.756,-0.986) (5.760,-0.990) (5.764,-0.924) (5.768,-0.966) (5.772,-0.952) (5.776,-0.940) (5.780,-0.961) (5.784,-0.982) (5.788,-0.986) (5.792,-0.958) (5.796,-0.977) (5.800,-1.031) (5.804,-1.040) (5.808,-1.066) (5.812,-1.029) (5.816,-1.002) (5.820,-0.965) (5.824,-0.990) (5.828,-1.022) (5.832,-0.980) (5.836,-0.979) (5.840,-0.973) (5.844,-0.947) (5.848,-0.963) (5.852,-0.930) (5.856,-0.969) (5.860,-1.036) (5.864,-1.073) (5.868,-1.047) (5.872,-1.035) (5.876,-1.125) (5.880,-1.097) (5.884,-1.127) (5.888,-1.161) (5.892,-1.124) (5.896,-1.124) (5.900,-1.094) (5.904,-1.173) (5.908,-1.180) (5.912,-1.230) (5.916,-1.215) (5.920,-1.221) (5.924,-1.207) (5.928,-1.144) (5.932,-1.123) (5.936,-1.117) (5.940,-1.083) (5.944,-1.064) (5.948,-1.040) (5.952,-1.005) (5.956,-0.948) (5.960,-0.974) (5.964,-0.949) (5.968,-0.948) (5.972,-0.955) (5.976,-0.977) (5.980,-0.996) (5.984,-1.029) (5.988,-1.001) (5.992,-1.015) (5.996,-1.014) (6.000,-1.013)};
    \node[myblue, font=\small] at (3.8,1.72) {$y$};
  \end{scope}

    \end{tikzpicture}
    \caption{Initial domination for sample paths of diffusions (Example~\ref{ex:diffusions}).
    The path $x$ is initially dominated by $y$ since $x$ is pointwise less than $y$ up to some strictly positive time.}
    \label{fig:ex-diffusions}
\end{figure}

\begin{example}[diffusions]\label{ex:diffusions}
    Let $S:=C([0,\infty);\Rbb)$ denote the space of continuous functions endowed with the topology of uniform convergence on compact sets.
    Let $\preceq$ denote the relation on $S$ where $x\preceq y$ means $x_t\le y_t$ for all sufficiently small $t>0$; in other words, the sample path $y$ stays entirely above the sample path $x$ for some small amount of time;
    see Figure~\ref{fig:ex-diffusions} for an illustration.
    Our Theorem~\ref{thm:main} then states
    
    \begin{equation*}
        \max_{A\in R_{\preceq}^{\ast}}(\mu(A) - \nu(A)) = \inf_{\pi\in\Pi(\mu,\nu)}(1-\pi(R_{\preceq}))
    \end{equation*}
    for all $\mu,\nu\in\mathcal{P}(S)$, where the infimum can fail to be achieved.
    However, we emphasize that, when $\mu,\nu$ are the laws of diffusions admitting strong solutions, then one can often construct explicitly primal feasible couplings (albeit, suboptimal ones) by coupling their driving Brownian motions; this allows one to uniformly control the dual side, which contains many events of interest.
\end{example}

\section{Proof of Positive Part of Theorem~\ref{thm:main}}\label{sec:proof}

This section contains the proof our main result, as well as some intermediate results of independent interest.
Throughout, we let $R\subseteq S\times S$ denote a reflexive, transitive, $F_{\sigma}$ relation, and we fix arbitrary Borel probability measures $\mu,\nu$ on $S$.
The remaining subsections divide the proof into a few  different steps: eliminating one of the two dual variables (Subsection~\ref{subsec:dual-elim}), proving the existence of a dual maximizer (Subsection~\ref{subsec:maximizer}), and converting this dual maximizer into a maximizing indicator function (Subsection~\ref{subsec:indicator}).

\subsection{Dual variable elimination}\label{subsec:dual-elim}

Note that $R$ being $F_{\sigma}$ implies that there exists a non-decreasing sequence $\{R_n\}_{n\in\mathbb{N}}$ of closed sets in $S\times S$ such that $R = \bigcup_{n\in\mathbb{N}}R_n$.
Then, let us write $c=1-\ind_R$ and $c_n = 1-\ind_{R_n}$ for each $n\in\mathbb{N}$.
It follows that the hypotheses of Theorem~\ref{thm:fn_reduction_to_one_var} are satisfied, so, by combining with general results on Kantorovich duality (e.g., \cite[Corollary~2.3.9]{RachevRdorf}), we have
\begin{equation}\label{eqn:one-dual-var}
    \sup_{\substack{\phi\in b\mathcal{B}(S)\\\phi\ominus \phi\le c}}\left(\int_{S}\phi\diff \mu - \int_{S}\phi \diff \nu\right)
    = \inf_{\pi\in\Pi(\mu,\nu)}(1-\pi(R)).
\end{equation}
In the remainder of this subsection we prove Theorem~\ref{thm:fn_reduction_to_one_var}, hence establishing~\eqref{eqn:one-dual-var}.
Note that this shows that the Kantorovich dual can in fact be taken to have one dual variable rather than two.

\begin{proof}[Proof of Theorem~\ref{thm:fn_reduction_to_one_var}]
Let $\phi,\psi:S\to\Rbb$ be arbitrary bounded, Borel measurable functions satisfying $\phi(x)+\psi(y)\le c(x,y)$ for all $x,y\in S$, and write $\psi^c(x) \coloneqq \inf_{y \in S} (c(x,y) - \psi(y))$ for the \textit{$c$-transform of $\psi$}.
    Now note that property (a) implies $\psi^c \leq -\psi$ pointwise, and that property (b) implies $\psi^{cc} = -\psi^c$.
    Moreover, by standard optimal transport considerations, we have
    \begin{equation}\label{eqn:c_transform_dual_inequality}
        \int \phi \, d\mu + \int \psi \, d\nu \le \int \psi^c \, d\mu + \int \psi \, d\nu \le \int \psi^c \, d\mu - \int \psi^c \, d\nu.
    \end{equation}
    While it may appear that Theorem~\ref{thm:fn_reduction_to_one_var} follows immediately from \eqref{eqn:c_transform_dual_inequality}, there is a problem in that $\psi^c$ need not be Borel measurable; indeed, if $\psi = \mathbf \ind_B$ and $c = 1-\ind_R$, then $\psi^c = -\mathbf \ind_{p((B\times S)\cap R)}$ where $p:S\times S\to S$ denotes the first-coordinate projection, in which case $p((B\times S)\cap R)$ analytic but possibly non-measurable.

    We claim that $\psi^c$ is, in general, universally measurable, so that $\int \psi^c \diff\mu, \int \psi^c \diff\nu$ are well-defined via the $\mu$- and $\nu$-completions of $\mathcal{B}(S)$, respectively.
    To show this, note that the strict sub-level sets can be written as
        \begin{align*}
            (\psi^c)^{-1}((-\infty,\alpha)) &= \left\{ x \in S : \inf_{y \in S} (c(x,y) - \psi(y)) < \alpha \right\} \\
            &= \{ x \in S : \textnormal{there exists } y \in S \text{ such that } c(x,y) - \psi(y) < \alpha \} \\
            &= p(\{ (x,y) \in S \times S : c(x,y) - \psi(y) < \alpha \}).
        \end{align*}
        As projections of Borel sets, these sub-level sets are universally measurable. (See \cite[Theorem~A1.2]{Kallenberg}.)

        Now let us show how to get around this measurability problem by directly constructing a Borel-measurable function $f$ with $f(x) - f(y) \leq c(x,y)$ pointwise, whose objective value on the right side of \eqref{eqn:one_var_function_equality} is within $\varepsilon>0$ of the objective value of $(\phi,\psi)$ on the left side of \eqref{eqn:one_var_function_equality}, where $\varepsilon>0$ is fixed and arbitrary. First, without loss of generality, we may assume that $\psi \leq 0$, so that $\psi^c \geq 0$. Now let $\eta = \sum_{i=1}^r b_i \mathbf 1_{E_i}$ be a (universally-measurable) simple function (with disjoint $E_i$) such that $\eta \leq \psi^c$ and $\int \eta \, \diff\mu \geq \int \psi^c \, \diff\mu - \varepsilon/2$. By the inner regularity of $\mu$, we may find (disjoint) compact sets $K_i \subseteq E_i$ such that $\mu(E_i \setminus K_i) \leq \frac{\varepsilon}{2rb_i}$, so that, defining $g := \sum_{i=1}^r b_i \mathbf 1_{K_i},$
        \begin{equation*}
            \int g \, \diff\mu = \sum_{i=1}^r b_i \mu(K_i) \geq \sum_{i=1}^r b_i \left( \mu(E_i) - \frac{\varepsilon}{2r b_i}  \right) = \int \eta \, \diff\mu - \frac{\varepsilon}{2} \geq \int \psi^c \, \diff\mu - \varepsilon.
        \end{equation*}
        If we can show that $g^c$ is Borel-measurable, we can finish by choosing $f = -g^c$ because $-g^c(x) + g^c(y) = -g^c(x) + (-g^c)^c(y) \leq c(x,y)$ and
        \begin{align*}
            \int (-g^c) \, \diff\mu - \int (-g^c) \, \diff\nu &\geq \int g\,\diff \mu - \int \psi^c \, \diff\nu \\
            &\geq \int \psi^c \, \diff\mu - \int \psi^c \, \diff\nu - \varepsilon \geq \int \phi \, \diff\mu - \int \psi \, \diff\mu - \varepsilon
        \end{align*}
        where we have used $0 \leq g \leq - g^c \leq -\psi^{cc} = \psi^c$.

        To see that $g^c$ is Borel measurable, we again look at strict sub-level sets. And since $g \geq 0$, we know that $g^c \leq 0$, so we only need to look at $\alpha$ sub-level sets for $\alpha \leq 0$.
        \begin{align*}
            (g^c)^{-1}((-\infty,\alpha)) &= \left\{ x \in S : \inf_{y \in S} (c(x,y) - g(y)) < \alpha\right\} \\
            &= \{ x \in S : \textnormal{there exists }  y \in S \text{ such that } c(x,y) - g(y) < \alpha\} \\
            &= p(\{ (x,y) \in S \times S : c(x,y) - g(y) < \alpha\}) \\
            &= p \left( \bigcup_{n=1}^\infty \{(x,y) \in S \times S : c_n(x,y) - g(y) < \alpha\} \right) \\
            \intertext{For $y$ such that $g(y) = 0$, i.e.\ $y \notin \bigcup_{i=1}^r K_i$, the condition becomes $c_n(x,y) < \alpha \leq 0$, which is impossible because $c_n \geq 0$. So we may assume that $y \in K_1 \cup \cdots \cup K_r$. Then}
            (g^c)^{-1}((-\infty,\alpha))&= p \left( \bigcup_{n=1}^\infty \bigcup_{i=1}^r\{(x,y) \in S \times K_i : c_n(x,y) - b_i < \alpha\} \right) \\
            &= \bigcup_{n=1}^\infty \bigcup_{i=1}^r p(\{(x,y) \in S \times K_i : c_n(x,y) < \alpha + b_i \}).
        \end{align*}
        Finally, we claim that each of these projections is closed and hence Borel-measurable. Suppose $x_m \in p(\{(x,y) \in S \times K_i : c_n(x,y) < \alpha + b_i \})$ with $x_m \to x \in S$; we want to show that $x$ also lies in the projection. There exist $y_m \in K_i $ such that $c_n(x_m,y_m) < \alpha + b_i$, and by the compactness of $K_i$, by passing to a subsequence, we may assume that $y_m \to y \in K_i$. Then $y$ witnesses the membership of $x$ in the projection, as
        $$c_n(x,y) \leq \liminf_{m \to \infty} c_n(x_m,y_m) < \alpha + b_i,$$
        where we have used the lower semicontinuity of $c_n$. So these projections are closed, and we conclude that $g^c$ is Borel-measurable.
\end{proof}

\begin{remark}
    Discussions about the Borel measurability of $c$-transformed functions appear already in most standard introductions to optimal transport (e.g., \cite[Remark~5.5 and p. 69]{Villani}).
    Our result required slightly more work, since our cost function $c$ need not even be lower semi-continuous.
\end{remark}


\subsection{Existence of dual maximizer}\label{subsec:maximizer}

Our next step is to show that the Kanotorovich dual problem, i.e., the left side of \eqref{eqn:one-dual-var}, admits a maximizer for $c=1-\ind_R$.

\begin{remark}
    The analysis in this subsection does not require any hypothesis on the function $c=1-\ind_R$ beyond being uniformly bounded.
    So, the same argument applies to the two-variable appearing on the left side of \eqref{eqn:one_var_function_equality}.
\end{remark}

The idea is to take a weak limit of approximate maximizers to get a maximizer. Consider the Hilbert space $\mathcal{H}:=L^2(\rho)\times L^2(\rho)$, where $\rho := (\mu + \nu)/2$; the choice of $L^2(\rho)\times L^2(\rho)$ rather than $L^2(\mu)\times L^2(\nu)$ will be useful later. Let $\phi_n \in b\mathcal B(S)$ (with $\phi_n \ominus \phi_n \leq c$) be a sequence of $\sfrac{1}{n}$-approximate maximizers for the Kantorovich dual problem. The sequence $\{(\phi_n,-\phi_n)\}_{n\in\mathbb N}$ is norm-bounded in $\mathcal{H}$ since it is pointwise uniformly bounded, so the Banach-Alaoglu theorem guarantees that there exist some $\{n_k\}_{k\in\mathbb N}$ and $(\phi_{\infty},-\phi_{\infty})\in \mathcal{H}$ such that we have $(\phi_{n_k},-\phi_{n_k})\to (\phi_{\infty},-\phi_{\infty})$ weakly.
    
    However, notice that the functions $\phi_{\infty},-\phi_{\infty}$ are only defined up to null sets, so we cannot hope to show $\phi_{\infty} \ominus \phi_{\infty} \leq c$, which is a pointwise constraint. To obtain a pointwise cost inequality, we need to show that $\phi_{\infty}$ admits a modification $\phi'_{\infty}$ so that $(\phi'_{\infty},-\phi'_{\infty})$ together satisfy the pointwise constraint.
    The key to this is the following intermediate result, which is essentially equivalent to main result of \cite{HaydonShulman} which answers a measure-theoretic question initially posed by Arveson \cite{Arveson}; we believe it may be of interest in similar works on optimal transport in the future.

    \begin{lemma}\label{lem:Arveson}
    For $f,g:S\to \mbb R$ and $\ell:S\times S\to[0,\infty)$ any Borel-measurable functions, the following are equivalent:
    \begin{enumerate}
        \item[(a)] For all $\pi \in\Pi(\mu,\nu)$ we have $f(x)+g(y)\le \ell(x,y)$ holding for $\pi$-almost all $(x,y)\in S\times S$.
        \item[(b)] There exist Borel measurable functions $f', g':S\to \mbb R$ such that $f'(x)=f(x)$ holds for $\mu$-almost all $x\in S$, $g'(y)=g(y)$ holds $\nu$-almost all $y\in S$, and $f'(x)+g'(y)\le \ell(x,y)$ holds for all $x,y\in S$.
    \end{enumerate}
\end{lemma}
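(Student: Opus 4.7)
The plan is to prove (b) $\Rightarrow$ (a) directly and to reduce (a) $\Rightarrow$ (b) to a null-set covering statement that is essentially equivalent to the cited Haydon--Shulman theorem.

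For (b) $\Rightarrow$ (a), I would fix any $\pi \in \Pi(\mu,\nu)$ and observe that the Borel set $\{x : f(x) \neq f'(x)\}$ has $\mu$-measure zero and $\{y : g(y) \neq g'(y)\}$ has $\nu$-measure zero, so by the marginal condition on $\pi$ their cylinders in $S \times S$ are $\pi$-null. Off the union of these two $\pi$-null cylinders, the pointwise bound $f'(x) + g'(y) \le \ell(x,y)$ coincides with $f(x) + g(y) \le \ell(x,y)$, which gives (a).

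For (a) $\Rightarrow$ (b), I would introduce the Borel set $B := \{(x,y) \in S \times S : f(x)+g(y) > \ell(x,y)\}$ and reformulate (a) as the statement that $\pi(B) = 0$ for every $\pi \in \Pi(\mu,\nu)$. The crux, which is also the step I expect to be the main obstacle, is the null-set covering claim: there exist Borel $N, M \subseteq S$ with $\mu(N) = \nu(M) = 0$ such that $B \subseteq (N \times S) \cup (S \times M)$. The approximate form of this identity, $\sup_\pi \pi(B) = \inf\{\mu(N) + \nu(M) : B \subseteq (N \times S) \cup (S \times M)\}$, is the classical Kellerer--Strassen duality for zero-one cost, but passing from an infimum of zero to an \emph{attained} covering by genuine null sets is substantially deeper and is precisely the content of the Haydon--Shulman theorem answering Arveson's question. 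I would invoke this as a black box, citing it directly rather than reproving it.

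Once $N, M$ are in hand, I would construct the modifications by setting $f' = f$ off $N$, $g' = g$ off $M$, and choosing the values on the null sets so as to force the pointwise bound. In the bounded case, putting a sufficiently negative constant on $N$ and on $M$ works, using $\ell \ge 0$. For the general real-valued case, I would either pre-truncate $f,g$ at growing thresholds and take a pointwise limit, or define $f'(x) := \inf\{\ell(x,y) - g(y) : y \in S \setminus M\}$ on $N$ (symmetrically for $g'$ on $M$), taking some care to secure Borel measurability via a regularization argument analogous to the one used in the proof of Theorem~\ref{thm:fn_reduction_to_one_var}. This last step is essentially bookkeeping once the Haydon--Shulman covering has been obtained.
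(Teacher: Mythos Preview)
Your overall strategy matches the paper's: (b) $\Rightarrow$ (a) is direct, and (a) $\Rightarrow$ (b) proceeds by defining the violation set $B$, invoking Haydon--Shulman to cover it as $B \subseteq (N \times S) \cup (S \times M)$ with $\mu(N) = \nu(M) = 0$, and then redefining $f,g$ on $N,M$. The paper simply puts $f' := 0$ on $N_f$ and $g' := 0$ on $N_g$; your choice of a sufficiently negative constant in the bounded case is in fact more careful, since the paper's zero can fail on the mixed rectangles (for $x \in N$, $y \notin M$ one needs $g(y) \le \ell(x,y)$, which is not automatic).

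There is, however, a genuine gap in your handling of the unbounded case: the lemma as stated is \emph{false} without a boundedness hypothesis, so the ``bookkeeping'' you propose cannot succeed. Take $S = \{0,1,2,\ldots\}$, $\mu = \delta_0$, $\nu = \sum_{n \ge 1} 2^{-n}\delta_n$, $f \equiv 0$, $g(n) = n$, and $\ell(m,n) = n\,\ind_{\{m=0\}}$. The only coupling is $\delta_0 \otimes \nu$, supported where $f + g = \ell$, so (a) holds. But (b) would force $f'(0)=0$ and $g'(n)=n$ for every $n \ge 1$ (since $\nu$ charges each such point), whence $f'(1) + n \le \ell(1,n) = 0$ for all $n \ge 1$, impossible for real-valued $f'$. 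Thus neither the truncation-and-limit idea nor the $c$-transform construction can work in this generality; the correct repair is to add a boundedness assumption on $f,g$ (which is all that is used in the paper's application), after which your negative-constant modification is exactly the right construction.
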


\begin{proof}
    The claim (b) implies (a) is immediate, so we only need to prove that (a) implies (b). Let $E:=\{(x,y)\in S\times S: f(x)+g(y)>\ell(x,y)\}$, and note by assumption that we have $\pi (E) = 0$ for all $\pi \in\Pi(\mu,\nu)$.
    By \cite[Corollary, p. 500]{HaydonShulman} this implies that there exist Borel sets $N_{f},N_{g}\subseteq S$ with $\mu(N_{f}) = \nu(N_{g}) = 0$ such that $E\subseteq (N_{f}\times S)\cup (S\times N_{g})$.
    Now we set
    \begin{equation*}
        f'(x) := \begin{cases}
            f(x) &\textnormal{ if } x\notin N_{f} \\
            0 &\textnormal{ if } x\in N_f \\
        \end{cases}
    \end{equation*}
    and similar for $g'$. Then $f'$ and $g'$ are as desired.
\end{proof}
    
    To apply Lemma~\ref{lem:Arveson}, we need to check that $(\phi_\infty,-\phi_\infty)$ satisfies the cost inequality $\pi$-a.s.\ for any $\pi \in \Pi(\rho,\rho)$. Define
    \begin{align*}
        h(x,y)&:= 1-\ind_{R}(x,y) - \phi_{\infty}(x) + \phi_{\infty}(y), \qquad\textnormal{ and}\\
        h_k(x,y)&:= 1-\ind_{R}(x,y) - \phi_{n_k}(x) + \phi_{n_k}(y)
    \end{align*}
    for all $k\in\mathbb N$, and observe that $\phi_{n_k}\ominus \phi_{n_k} \le c$ implies $h_{n_k} \ge 0$.
    Now take an arbitrary $\pi\in\Pi(\rho,\rho)$, and notice that that $h_{n_k}\to h$ weakly in $L^2(\pi)$.
    Thus, by the definition of weak convergence, we have:
    \begin{equation*}
        0 \le \lim_{k\to\infty}\int_{S\times S}h_k\ind_{h \le 0}\diff \pi = \int_{S\times S}h\ind_{h \le 0}\diff \pi \le 0,
    \end{equation*}
    so that $\pi(\{h < 0\}) = 0$.
    
      Lemma~\ref{lem:Arveson} now shows that we can adjust $\phi_\infty$,$-\phi_\infty$ by setting them to be 0 on $\rho$-null sets $N_\phi,N_{-\phi}$ to obtain Borel-measurable versions $\phi_\infty',(-\phi_\infty)'$ which satisfy the cost inequality pointwise; by making the modification using $N_{\phi} \cup N_{-\phi}$ instead for both functions, we can guarantee that $(-\phi_\infty)' = -\phi_\infty'$ pointwise. And since the set $N_\phi \cup N_{-\phi}$ is $\rho$-null and hence both $\mu$-null and $\nu$-null (this is where we use the choice of $L^2(\rho) \times L^2(\rho)$ rather than $L^2(\mu) \times L^2(\nu)$), we can use weak convergence to get
    \begin{align*}
        \int \phi_\infty' \, \diff\mu - \int \phi_\infty' \, \diff\nu &= \int \phi_\infty \, \diff\mu - \int \phi_\infty \, \diff\nu \\
        &= \int \phi_\infty \left( \frac{\diff \mu}{\diff \rho} - \frac{\diff \nu}{\diff \rho} \right) \, \diff\rho \\
        &= \lim_{k \to \infty} \int \phi_{n_k} \left( \frac{\diff \mu}{\diff \rho} - \frac{\diff \nu}{\diff \rho} \right) \, \diff\rho
    \end{align*}
    where we used $0 \leq \frac{\diff \mu}{\diff \rho}, \frac{\diff \nu}{\diff \rho} \leq 2$ hence $\frac{\diff \mu}{\diff \rho} - \frac{\diff \nu}{\diff \rho} \in L^2(\rho)$.
    Then
    \begin{align*}
        \int \phi_\infty' \, \diff\mu - \int \phi_\infty' \, \diff\nu  &= \lim_{k \to \infty} \int \phi_{n_k} \, \diff\mu - \int \phi_{n_k} \, \diff\nu \\
        &\geq \lim_{k \to \infty} \sup_{\substack{\phi \in b \mathcal B(S) \\ \phi \ominus \phi \leq c}} \int \phi \, \diff\mu - \int \phi \, \diff\nu - \frac{1}{n_k} \\
        &= \sup_{\substack{\phi \in b \mathcal B(S) \\ \phi \ominus \phi \leq c}} \int \phi \, \diff\mu - \int \phi \, \diff\nu.
    \end{align*}

\subsection{Conversion to indicator maximizer}\label{subsec:indicator}

As the last step of the affirmative direction, we show that the Kantorovich dual problem, i.e., the left side of \eqref{eqn:one-dual-var}, can be taken over pairs of indicator functions rather than over pairs of bounded measurable functions.

To do this, let $\phi_{\infty}':S\to\Rbb$ denote the dual maximizer from the previous subsection.
Recall from the usual ``layer cake'' decomposition that we can write
\begin{equation*}
    (\phi_\infty'(x),-\phi_\infty'(y)) = \int_0^1 (\ind_{A_t}(x),-\ind_{A_t}(y)) \, \diff t,
\end{equation*}
where we have defined $\ind_{A_t} := \{x\in S: t\le \phi_\infty'(x)\le 1\}$ for all $0\le t\le 1$.

To conclude, we claim that $A_t\in R^{\ast}$ holds for all $0\le t\le 1$. Indeed, if $(x,y)\in S\times S$ satisfies $(x,y)\notin R$ or $\phi_\infty'(x) < t$, then we of course have $\ind_{A_t}(x) - \ind_{A_t}(y) \le 1-\ind_{R}(x,y)$; otherwise, we have $(x,y)\in R$ and $t \le \phi_\infty'(x)$, so we conclude $-\phi_\infty'(y) \le - \phi_\infty'(x) \le -t$ hence $\ind_{A_t}(x) - \ind_{A_t}(y) = 1 - 1 = 0 = 1-\ind_{R}(x,y)$.

Now note that by Fubini-Tonelli, we have:
    \begin{align*}
        \int_{S}\phi_\infty'\diff \mu - \int_{S}\phi_\infty' \diff \nu &= \int_{S}\int_{0}^{1}\ind_{A_t}(x)\diff t\diff \mu(x) - \int_{S}\int_{0}^{1}\ind_{A_t}(y) \diff t\diff \nu(y) \\
        &= \int_{0}^{1}\left(\int_{S}\ind_{A_t}(x)\diff \mu(x) - \int_{S}\ind_{A_t}(y)\diff \nu(y) \right)\diff t \\
        &= \int_{0}^{1} (\mu(A_t) - \nu(A_t))\diff t \\
        &\le \sup_{A \in R^*}(\mu(A)-\nu(A)),
    \end{align*}
establishing the affirmative part of Theorem~\ref{thm:main}.

\section{Proof of Negative Part of Theorem~\ref{thm:main}}\label{sec:counter-ex}

In this section, we prove the negative part of Theorem~\ref{thm:main} (i.e., that the right side of \eqref{eqn:one-dual-var} can fail to admit minimizers) by explicitly constructing several counterexamples of interest.
We construct four counterexamples; Example~\ref{ex:counterexample} is an elementary counterexample on the space $[0,1]$, Example~\ref{ex:counterexample2} is an embedding of the first example into the pre-order of eventual domination on a suitable path space, and Example~\ref{ex:counterexample3} and Example~\ref{ex:counterexample4} are embeddings of Example~\ref{ex:counterexample} and Example~\ref{ex:counterexample2}, respectively, into path spaces in such a way that both marginal distributions satisfy the Markov property.

\begin{figure}[t]
    \centering
    \begin{tikzpicture}
          \begin{axis}[
              axis lines=middle,
              xlabel={$x$},
              ylabel={$y$},
              xmin=0, xmax=2.15,
              ymin=0, ymax=2.15,
              xtick={0,1,2},
              ytick={0,1,2},
              samples=200,
              domain=0:2,
              width=8cm,
              height=8cm,
              enlargelimits=false,
              clip=true,
              axis on top
            ]
        \addplot[domain=0:2, thick, black] {x};
        \addplot[domain=0:1, thick, dashed] {x+1};
        \addplot[
          draw=none,
          fill=black,
          fill opacity=0.1
        ] coordinates {
          (0,1)
          (0,2)
          (1,2)
        };
      \end{axis}
    \end{tikzpicture}    
    \caption{Counterexample to Strassen's theorem for $F_{\sigma}$ partial orders.}
    \label{fig:counter-ex}
\end{figure}

\begin{example}\label{ex:counterexample}
        Let $S = [0,2]$, and let $\mu$ and $\nu$ denote the Lebesgue measure on the subintervals $[0,1]$ and $[1,2]$, respectively.
        Then consider the following relation, which can also be visualized with the help of Figure~\ref{fig:counter-ex}:
        \begin{equation*}
            R := \{(x,y)\in S\times S : x=y \textnormal{ or } y > x + 1 \}.
        \end{equation*}
        Observe that $R$ is an $F_{\sigma}$ set, since we can write $R=\bigcup_{n\in\mathbb{N}}R_n$, where
        \begin{equation}\label{eqn:closed-poset}
            R_n := \left\{(x,y)\in S\times S : x=y \textnormal{ or } y \ge x + \frac{1}{n} \right\}.
        \end{equation}
        Also, $R$ is a partial order, and in particular it is reflexive and transitive.

        First, we claim that the primal value is zero.
        To see this, fix $0<\varepsilon<1$ and define the map $T_{\varepsilon}(x):[0,1]\to[1,2]$ via $T(x):=1+[x-\varepsilon]$, where $[\alpha]$ denotes the fractional part of a real number $\alpha\in\Rbb$;
    then define $\pi_{\varepsilon}\in\Pi(\mu,\nu)$ as the pushforward $\pi_{\varepsilon}:=(\textnormal{id}, T_{\varepsilon})_{\#}\mu$.
    Note $\pi_{\varepsilon}(R) = 1-\varepsilon$ for all $0 < \varepsilon < 1$, so taking $\varepsilon\to 0$ shows that the primal value is zero.

    Second, we claim that there is no coupling achieving a primal value of zero.
    To see this, assume towards a contradiction that $\pi$ is a coupling of $\mu,\nu$ with $\pi(R) = 1$.
    Then let $X$ and $Y$ be random variables defined on a common probability space, with probability $\mathbb{P}$ and expectation $\mathbb{E}$, such that the joint distribution of $(X,Y)$ is $\pi$. 
    Since the distributions $\mu,\nu$ of $X,Y$ are mutually singular, we have $\mathbb{P}(X=Y) = 0$;
    By combining this with $\mathbb{P}((X,Y)\in R) = 1$, we conclude $\mathbb{P}(Y>X+1) = 1$.
    Taking expectations yields $\mathbb{E}[Y] > \mathbb{E}[X] +1$, but this contradicts the marginal constraints which imply $\mathbb{E}[X] =\int_{0}^{1}x\diff x = \sfrac{1}{2}$ and $\mathbb{E}[Y] =\int_{1}^{2}x\diff x = \sfrac{3}{2}$.
    This shows that there is no coupling achieving the primal value.
\end{example}

Example~\ref{ex:counterexample} is counterexample to the claim ``Thorisson's working hypothesis holds for $F_{\sigma}$ partial orders,'' although only insofar as the primal problem may fail to admit minimizers.
In light of analogous work on equivalence relations \cite[Proposition~3.18]{JaffeCoupling}, it is natural to wonder whether the weaker claim ``Thorisson's working hypothesis holds for partial orders that can be written as a countable increasing union of closed partial orders'' is true; note, however, that Example~\ref{ex:counterexample} is a counterexample to this statement as well, since the closed sets in \eqref{eqn:closed-poset} are indeed partial orders.
This discrepancy highlights that there is further structure in equivalence relations, compared to in partial orders, which allows for a more complete duality theory.

Although the partial order $([0,2],R)$ from Example~\ref{ex:counterexample} may seem artificial, we note that it can be embedded into many other orders of interest, hence establishing counterexamples in other settings.
As in Section~\ref{sec:examples}, we consider \textit{eventual domination} pre-orders.

\begin{example}\label{ex:counterexample2} 
    Let $S=\Rbb^{\mathbb{N}}$, let $\preceq$ denote the pre-order of eventual domination, and let $([0,2],R)$ be the partially ordered set defined in Example~\ref{ex:counterexample}.
    Our goal is to construct a function $\phi$ such that $\phi:([0,2],R)\to(\Rbb^{\mathbb N},\preceq)$ is an order embedding, meaning we have $(x,y)\in R$ if and only if $\phi(x)\preceq\phi(y)$.
    Indeed, we may set
    \begin{equation*}
        \phi(z) := \begin{cases}
            a(z) &\textnormal{ if } 0\le z\le 1 \\
            b(z-1)
            &\textnormal{ if } 1< z\le 2 \\
        \end{cases}, \qquad\textnormal{where}
    \end{equation*}
    \begin{equation*}
        (a(t))_n := \begin{cases}
            nt+1 &\textnormal{ if } n \equiv 0 \textnormal{ mod } 3 \\
            n^2-nt &\textnormal{ if } n \equiv 1 \textnormal{ mod } 3 \\
            0 &\textnormal{ if } n \equiv 2 \textnormal{ mod } 3 \\
        \end{cases} \qquad (b(t))_n := \begin{cases}
            nt &\textnormal{ if } n \equiv 0 \textnormal{ mod } 3 \\
            n^2 &\textnormal{ if } n \equiv 1 \textnormal{ mod } 3 \\
            n^2-nt &\textnormal{ if } n \equiv 2 \textnormal{ mod } 3. \\
        \end{cases}
    \end{equation*}
    It can be checked by casework that $\phi$ is an order embedding.
    In order to transfer Example~\ref{ex:counterexample} to the present setting, we let $\mu$ and $\nu$ be the Lebesgue measures on the intervals $[0,1]$ and $[1,2]$, respectively, and then push these forward to $\tilde\mu:= \phi_{\#}\mu$ and $\tilde\nu:= \phi_{\#}\nu$ on $\Rbb^{\mathbb N}$.
    
    First, we claim that the primal problem has value zero.
    To see this, note that for any $\pi\in\Pi(\mu,\nu)$ we have
    \begin{equation*}
        \int_{S\times S}(1-\ind_{R_{\preceq}})\diff \left((\phi,\phi)_{\#}\pi\right) = \int_{[0,2]\times [0,2]}(1-\ind_R)\diff \pi.
    \end{equation*}
    Hence, an $\varepsilon$-optimal coupling for the primal problem in $([0,2],R)$ leads to an $\varepsilon$-optimal coupling for the primal problem in $(S,\preceq)$, and we conclude that the optimal value is zero.

    Last, we show that there is no primal minimizer.
    For the sake of contradiction, assume there were some $\tilde\pi\in\Pi(\tilde\mu,\tilde\nu)$ satisfying $\tilde\pi(R_{\preceq}) = 1$.
    Now note that $\phi:[0,2]\to \Rbb^{\mathbb N}$ is injective, hence it possesses a Borel left-inverse \cite[Corollary~15.2]{KechrisClassical} which we denote $\phi^{-1}:\Rbb^{\mathbb N}\to[0,2]$; then set $\pi:=(\phi^{-1},\phi^{-1})_{\#}\tilde\pi\in\Pi(\mu,\nu)$, and compute
    \begin{equation*}
        \int_{[0,2]\times [0,2]}(1-\ind_{R})\diff \pi = \int_{S\times S}(1-\ind_{R_{\preceq}})\diff \tilde \pi=0.
    \end{equation*}
    This contradicts Example~\ref{ex:counterexample} since no such $\pi$ can exist.
\end{example}

Both Example~\ref{ex:counterexample} and Example~\ref{ex:counterexample2} can be adapted into counterexamples in the setting of eventual domination of Markov chains.

\begin{example}\label{ex:counterexample3}
    The first example is just to make a constant Markov chain. Let $S,R$ be as in Example~\ref{ex:counterexample}, and let $X_0,Y_0$ be uniform on $[0,1]$,$[1,2]$, respectively. Define the constant chains $X_n = X_0$ and $Y_n = Y_0$ Then, denoting $X = (X_n)_{n=0}^\infty, Y = (Y_n)_{n=0}^\infty$, and $\preceq$ as the preorder of eventual domination in terms of $R$, we have
    $$\P(X \preceq Y) = \P((X_0,Y_0) \in R)$$
    for any coupling of $X,Y$. Therefore, as in Example~\ref{ex:counterexample}, for any $\varepsilon > 0$, there exists a coupling of $X,Y$ with $\P(X \preceq Y) \geq 1- \varepsilon$, but there exists no coupling with $\P(X \preceq Y) = 1$.
\end{example}

\begin{example}\label{ex:counterexample4}
    For a non-constant counterexample, let $S = \R^\mathbb N$ be equipped with the partial order $\leq_\text{seq}$ of domination \emph{in every coordinate}, and let $X \sim \phi_\#\mu$, $Y \sim \phi_\#\nu$, where $\mu,\nu,\phi$ are as in Example~\ref{ex:counterexample2}. Then define $S$-valued Markov chains $\widetilde X, \widetilde Y$ with path space $M = S^\mathbb N$ by successive shifts of $X$ and $Y$:
    $$\widetilde X_n = (X_k)_{k=n}^{\infty}, \qquad \widetilde Y_n = (Y_k)_{k=n}^{\infty}.$$
    Then, denoting $\preceq_{M} = \{ (\widetilde x,\widetilde y) : \widetilde x_n \preceq_{\text{seq}} \widetilde y_n \text{ for all suff.\ large $n$} \}, \preceq_S = \{ ( x, y) : x_k \leq  y_k \text{ for all suff.\ large $k$} \}$ as the preorders of eventual domination on $M$ and $S$,
    $$\P(\widetilde X \preceq_M \widetilde Y) = \P(X \preceq_S Y)$$
    for any coupling of $X,Y$ (and hence any coupling of $\widetilde X, \widetilde Y$). Therefore, as in Example~\ref{ex:counterexample2}, for any $\varepsilon > 0$, there exists a coupling of $\widetilde X,\widetilde Y$ with $\P(\widetilde X \preceq_M \widetilde Y) \geq 1- \varepsilon$, but there exists no coupling with $\P(\widetilde X \preceq_M \widetilde Y) = 1$.
\end{example}

Examples~\ref{ex:counterexample3} and \ref{ex:counterexample4} can be trivially adapted to have the Markov chains start at the same state by adding a ``root state'' that transitions to the corresponding initial states in each chain.

\bibliography{main}
\bibliographystyle{siam}

\end{document}